\documentclass[a4paper,11pt]{article}

\usepackage{amsmath}
\usepackage{amssymb}
\usepackage{amsthm}
\usepackage{amsfonts}
\usepackage{mathtools}
\usepackage{xcolor}
\usepackage{url}
\numberwithin{equation}{section}

\theoremstyle{plain}
\newtheorem{theorem}{Theorem}[section]
\newtheorem{proposition}{Proposition}[section]
\newtheorem{lemma}{Lemma}[section]
\newtheorem{corollary}{Corollary}[section]

\theoremstyle{definition}
\newtheorem{definition}{Definition}[section]

\theoremstyle{remark}

\begin{document}
\title{\textbf{
Edge-Defect Spectral Methods for Higher-Order Rankings of Spanning Tree Counts}
\vspace{10mm}}

\author{
Shunya Tamura \\
Okegawa City Okegawa West Junior High School \\
3680-1 Kawataya, Okegawa, Saitama 363-0027, Japan \\
Corresponding author: 
shunya.tamura059@gmail.com \\
\\ \\
Jos\'e Luis Palacios \\
Department of Electrical and Computer Engineering, University of New Mexico \\
Albuquerque, NM 87131, USA \\
jpalacios@unm.edu
}

\date{}

\thispagestyle{empty}
\maketitle

\begin{abstract}
In this paper, we study the higher-order ranking, by spanning-tree count,
of graphs obtained from a complete graph by deleting a fixed number of edges.
Using the edge-defect matrix determined by the deleted edges and the
interaction number measuring the local overlap among them,
we derive a stability inequality that quantitatively estimates the decrease
in the number of spanning trees from the matching-deletion case.
Combining this stability estimate with a classification
of deletion graphs having small interaction number,
we determine, up to isomorphism, the nine deletion graphs with the
largest spanning-tree counts for $p\geq6$ and $n\geq2p$.
We also clarify the relation between the interaction number,
the local structure of the deletion graph, and the decrease in the number
of spanning trees through a logarithmic expansion of the normalized
spanning-tree count.

\end{abstract}

\noindent
{\bf Keywords:} spanning tree, edge-defect matrix, 
edge deletion, higher-order ranking, interaction number, Laplacian spectrum.

\noindent
{\bf 2020 Mathematics Subject Classification:}
Primary 05C50; Secondary 05C30, 15A18.

\section{Introduction}
\label{sec:introduction}

The number of spanning trees is one of the fundamental invariants
in graph theory.
If $\tau(G)$ denotes the number of spanning trees of a connected graph
$G$ on $n$ vertices, then, by Kirchhoff's matrix-tree theorem,
$\tau(G)$ can be expressed either as a cofactor of the Laplacian matrix
or in terms of the product of the nonzero Laplacian eigenvalues
\cite{Kirchhoff1847,Biggs1974}.
In particular, for the complete graph $K_n$,
Cayley's formula
\[
\tau(K_n)=n^{n-2}
\]
holds
\cite{Cayley1889}.

In addition to the problem of computing the number of spanning trees,
extremal problems concerning graphs that maximize the number of spanning trees
among graphs with prescribed numbers of vertices and edges
have long been studied.
Kelmans \cite{Kelmans1996},
Petingi, Boesch and Suffel \cite{PetingiBoeschSuffel1998},
and Petingi and Rodriguez \cite{PetingiRodriguez2002},
among others, investigated the structure of graphs maximizing
the number of spanning trees.
Moreover, Gilbert and Myrvold \cite{GilbertMyrvold1997} studied
comparison and maximization problems for the number of spanning trees
of almost complete graphs obtained by deleting a relatively small number
of edges from a complete graph.

For graphs obtained by deleting edges from a complete graph,
beginning with the classical work of Weinberg \cite{Weinberg1958},
formulas for the number of spanning trees associated with particular
deletion patterns, as well as more general determinantal representations,
have been studied
\cite{NikolopoulosPapadopoulos2006,GuoYan2023}.

Throughout the paper, let $n\geq3$, and let
\[
F=\{e_1,\ldots,e_p\}\subseteq E(K_n)
\]
be the set of deleted edges.
We also assume that
\[
G=K_n-F
\]
is connected.
Let $V(F)$ denote the set of all endpoints of the deleted edges, and define
\[
H_F=(V(F),F)
\]
to be the deletion graph associated with $F$.

Tamura \cite{Tamura2026EdgeDefect} introduced the edge-defect matrix
\[
Q=B^{\mathsf T}B,
\]
where the columns of $B$ are incidence vectors of arbitrarily oriented
deleted edges.

This matrix makes it possible to describe the effective resistance,
the Kirchhoff index, and the number of spanning trees of graphs obtained
by deleting edges from a complete graph within a unified spectral framework.
In particular, the following representation for the number of spanning trees
was obtained:
\[
\tau(K_n-F)
=
n^{\,n-p-2}\det(nI_p-Q)
\]
\cite{Tamura2026EdgeDefect}.

In Part I
\cite{TamuraPalacios2026},
we studied the higher-order ranking of the Kirchhoff index
for a fixed number $p$ of deleted edges.
As a quantity measuring the local overlap among the deleted edges,
we used the interaction number
\[
a(F)
=
\sum_{v\in V(H_F)}
\binom{d_{H_F}(v)}{2}
\]
and classified deletion graphs having small interaction number.
We further determined the first nine deletion graphs with the smallest
Kirchhoff indices under the assumptions
\[
p\geq6,
\qquad
n\geq\max\{13,2p\}.
\]

In the present paper, we apply this edge-defect spectral framework
to the higher-order ranking of the number of spanning trees.
More precisely, for fixed $p$, we aim to determine systematically,
among all graphs obtained from $K_n$ by deleting exactly $p$ edges,
not only the graph with the largest number of spanning trees,
but also those attaining the second, third, and subsequent largest values.

For this problem, we take the matching deletion
\[
H_F\cong pK_2
\]
as the reference configuration.
In this case,
\[
Q=2I_p,
\]
and hence
\[
A=Q-2I_p
\]
may be regarded as the spectral defect from the matching case.
Furthermore, defining
\[
T_{n,p}
=
n^{\,n-p-2}(n-2)^p
\]
as the reference value corresponding to the matching case and setting
\[
\mathcal{R}_{n,p}(F)
=
\frac{\tau(K_n-F)}{T_{n,p}},
\]
we obtain
\[
\mathcal{R}_{n,p}(F)
=
\det\left(
I_p-\frac{A}{n-2}
\right).
\]

The interaction number and the matrix $A$ satisfy
\[
\operatorname{tr}A=0,
\qquad
\operatorname{tr}A^2=2a(F)
\]
\cite{Tamura2026EdgeDefect,TamuraPalacios2026}.
By combining these relations with a log-determinant estimate,
we derive the following stability inequality for the normalized
number of spanning trees:
\[
\mathcal{R}_{n,p}(F)
\leq
\left(
\frac{n}{n-2}
\right)^{a(F)/2}
\exp\left(
-\frac{a(F)}{n-2}
\right).
\]
Thus, larger interaction number forces a stronger quantitative
decrease from the matching-deletion case.

Combining this stability estimate with the classification
of deletion graphs having small interaction number obtained in Part I
\cite{TamuraPalacios2026},
we obtain the main result of the present paper.
Namely, under the assumptions
$p\geq6$ and
\[
n\geq2p
\]
we show that the nine largest spanning tree counts are attained,
uniquely up to isomorphism and in the following order,
when the deletion graph is isomorphic to
\[
\begin{array}{
r@{\;}l
@{\qquad\qquad}
r@{\;}l
@{\qquad\qquad}
r@{\;}l
}
\text{(1)} & pK_2
&
\text{(2)} & P_3\cup(p-2)K_2
&
\text{(3)} & 2P_3\cup(p-4)K_2
\\[10pt]
\text{(4)} & P_4\cup(p-3)K_2
&
\text{(5)} & K_3\cup(p-3)K_2
&
\text{(6)} & 3P_3\cup(p-6)K_2
\\[10pt]
\text{(7)} & P_4\cup P_3\cup(p-5)K_2
&
\text{(8)} & P_5\cup(p-4)K_2
&
\text{(9)} & K_{1,3}\cup(p-3)K_2
\end{array}
\]
All other deletion graphs are excluded from the first nine positions
by the stability inequality.
Thus, only finitely many low-interaction candidates require direct comparison.

Throughout the range considered in Part I,
\[
p\geq6,
\qquad
n\geq\max\{13,2p\},
\]
the ordering of the nine largest spanning-tree counts obtained here
coincides with that of the nine smallest Kirchhoff indices in
\cite{TamuraPalacios2026}.
This coincidence is not immediate, since the Kirchhoff index is expressed
as a sum of rational functions of the edge-defect eigenvalues,
whereas the number of spanning trees is described by their product.
Our result shows that, at least in the above range,
the initial rankings of these two different graph invariants
are governed by the same local structures of the deletion graph.

Furthermore, by considering the logarithm of the normalized number
of spanning trees, we expand
\[
-\log\mathcal{R}_{n,p}(F)
\]
in terms of higher-order traces of $A$.
The quadratic term is determined by the interaction number,
while the cubic term involves the difference between the number of triples
of deleted edges incident with a common vertex and the number of triangles
contained in the deletion graph.
Consequently, for a fixed deletion graph and in the asymptotic regime
$n\to\infty$, the interaction number appears as the first local structural
quantity governing the decrease in the number of spanning trees,
whereas the higher-order traces detect finer distinctions among deletion graphs
having the same interaction number.

The paper is organized as follows.
In Section \ref{sec:preliminaries}, we collect the necessary known results
and notation and introduce the normalized number of spanning trees.
In Section \ref{sec:stability}, we derive the stability inequality,
and in Section \ref{sec:candidates}, we compare the normalized spanning-tree
counts of the nine candidate configurations.
In Section \ref{sec:main}, we exclude all cases with $a(F)\geq4$
simultaneously and prove the main theorem.
Finally, in Section \ref{sec:log-expansion},
we use the log-determinant expansion to investigate the relation
between the number of spanning trees and the local structure
of the deletion graph.

\section{Preliminaries}
\label{sec:preliminaries}

In this section, we collect the notation and known results needed for the proof
of the main theorem.
Unless otherwise stated, all graphs considered in this paper are finite,
simple, and undirected.

For a graph $G$, let $V(G)$ and $E(G)$ denote its vertex set and edge set,
respectively, and let $\tau(G)$ denote the number of spanning trees of $G$.
Let $L(G)$ denote the Laplacian matrix of $G$.
When $G$ is a connected graph on $n$ vertices,
we write its Laplacian eigenvalues as
\[
0=\lambda_1<\lambda_2\leq\cdots\leq\lambda_n.
\]

The number of spanning trees and the Laplacian spectrum are related by
the following standard consequence of Kirchhoff's Matrix--Tree Theorem.

\begin{theorem}[\cite{Kirchhoff1847,Biggs1974}, Matrix--Tree Theorem]
\label{thm:matrix-tree}
For a connected graph $G$ on $n$ vertices,
\[
\tau(G)
=
\frac{1}{n}
\prod_{i=2}^{n}\lambda_i.
\]
\end{theorem}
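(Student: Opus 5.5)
The statement is the classical Matrix--Tree Theorem in spectral form, and I would derive it from the cofactor version: for any vertex $v$, $\tau(G)=\det L_v(G)$, where $L_v(G)$ is the reduced Laplacian obtained by deleting the row and column of $v$. The plan is to relate the sum of all principal $(n-1)\times(n-1)$ minors of $L(G)$ to the coefficient of $x$ in the characteristic polynomial, and then read that coefficient off from the eigenvalues.

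First, I would establish (or cite from \cite{Kirchhoff1847,Biggs1974}) the cofactor statement. Orient the edges arbitrarily and write $L=NN^{\mathsf T}$ with $N$ the $n\times m$ vertex--edge incidence matrix. Deleting the row of $v$ gives $N_v$, and $L_v=N_vN_v^{\mathsf T}$. The Cauchy--Binet formula yields
\[
\det L_v=\sum_{S}\bigl(\det N_v[S]\bigr)^2,
\]
where $S$ ranges over $(n-1)$-subsets of edges. The key lemma, which I expect to be the main obstacle, is that $\det N_v[S]\in\{0,\pm1\}$, and that it is nonzero exactly when $S$ is the edge set of a spanning tree.

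For that lemma I would argue in two directions. If $S$ is not a tree, then, having $n-1$ edges, it is disconnected. The rows indexed by the vertices of a component avoiding $v$ then sum to zero, so the matrix is singular. If $S$ is a tree, I would induct on $n$ by expanding along a row corresponding to a leaf $u\neq v$ (such a leaf exists since a tree has at least two leaves). That row has a single entry $\pm1$, and the remaining minor is the reduced incidence matrix of $S-u$ on $G-u$.

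Next comes the spectral step. Write $\det(xI-L)=\prod_{i=1}^n(x-\lambda_i)$. Since $\lambda_1=0$, the coefficient of $x$ equals $(-1)^{n-1}\prod_{i=2}^n\lambda_i$. Expanding the determinant instead, the coefficient of $x$ is $(-1)^{n-1}$ times the sum of the principal $(n-1)$-minors, that is, $(-1)^{n-1}\sum_v\det L_v$. Each of these minors equals $\tau(G)$ by the cofactor version, so $\sum_v\det L_v=n\tau(G)$. Comparing the two expressions gives $n\,\tau(G)=\prod_{i=2}^n\lambda_i$, which is the claimed formula.

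Connectedness is not actually needed for the identity itself: if $G$ is disconnected, both sides vanish because $\lambda_2=0$. Connectedness is only used to justify the strict inequality $0<\lambda_2$ in the notation.
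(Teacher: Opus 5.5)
Your proof is correct and complete. Cauchy--Binet combined with the determinant lemma for reduced incidence matrices gives $\det L_v=\tau(G)$ for every $v$; in that lemma, the determinant is $0$ when the $n-1$ edges leave a component avoiding $v$, and $\pm1$ by leaf expansion when they form a spanning tree. Comparing the coefficient of $x$ in $\det(xI-L)=x\prod_{i=2}^{n}(x-\lambda_i)$ with $(-1)^{n-1}\sum_v\det L_v$ then yields $n\,\tau(G)=\prod_{i=2}^{n}\lambda_i$.

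The paper gives no proof of this statement; it quotes the classical Matrix--Tree Theorem from Kirchhoff and Biggs. Your derivation is the standard textbook argument, and your remark that connectedness is only needed to ensure $\lambda_2>0$ is also right.
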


In particular, since all nonzero Laplacian eigenvalues of the complete graph
$K_n$ are equal to $n$, we obtain
\[
\tau(K_n)=n^{n-2}.
\]
This is Cayley's formula \cite{Cayley1889}.

Throughout the remainder of the paper, let $n\geq3$, let
\[
F=\{e_1,\ldots,e_p\}\subseteq E(K_n)
\]
be the set of deleted edges, and assume that
\[
G=K_n-F
\]
is connected.
We also set
\[
V(F)
=
\bigcup_{e\in F}V(e),
\qquad
H_F=(V(F),F),
\]
and call $H_F$ the deletion graph associated with $F$.

Assign an arbitrary orientation to each deleted edge
\[
e_\alpha=\{u_\alpha,v_\alpha\},
\]
and let the corresponding incidence vector be
\[
b_\alpha=e_{u_\alpha}-e_{v_\alpha}.
\]
Arranging these vectors as columns, define
\[
B=
\begin{pmatrix}
b_1&\cdots&b_p
\end{pmatrix}
\in\mathbb{R}^{n\times p}.
\]

The following matrix was introduced by Tamura \cite{Tamura2026EdgeDefect}.
\begin{definition}[\cite{Tamura2026EdgeDefect}, Definition 2.1]
\label{def:edge-defect}
For a set $F$ of deleted edges,
\[
Q=Q_F=B^{\mathsf T}B
\]
is called the edge-defect matrix of $F$.
\end{definition}

We recall the basic properties of the edge-defect matrix.
\begin{lemma}[\cite{Tamura2026EdgeDefect}, Lemma 2.3]
\label{lem:basic-Q}
The edge-defect matrix $Q$ is a real symmetric positive semidefinite matrix
satisfying
\[
Q_{\alpha\alpha}=2,
\qquad
\operatorname{tr}Q=2p.
\]
Moreover, for $\alpha\neq\beta$,
\[
Q_{\alpha\beta}
=
\begin{cases}
0,
&
e_\alpha\cap e_\beta=\varnothing,
\\
\pm1,
&
|e_\alpha\cap e_\beta|=1.
\end{cases}
\]
Furthermore, the eigenvalues of $Q$ are independent of the choice
of orientations of the deleted edges.
\end{lemma}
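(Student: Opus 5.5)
We argue directly from the definition $Q=B^{\mathsf T}B$, so that every entry is an inner product of two columns of $B$: for all $\alpha,\beta$,
\[
Q_{\alpha\beta}=b_\alpha^{\mathsf T}b_\beta=\langle b_\alpha,b_\beta\rangle .
\]
\textbf{Symmetry and positive semidefiniteness.} Symmetry is immediate from $(B^{\mathsf T}B)^{\mathsf T}=B^{\mathsf T}B$. For any $x\in\mathbb{R}^p$ we have
\[
x^{\mathsf T}Qx=\|Bx\|^2\geq0,
\]
so $Q$ is positive semidefinite and, being real symmetric, has real eigenvalues.

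\textbf{Diagonal entries and trace.} Since $b_\alpha=e_{u_\alpha}-e_{v_\alpha}$ with $u_\alpha\neq v_\alpha$ (the graph is simple), we get
\[
Q_{\alpha\alpha}=\|b_\alpha\|^2=1+1=2 .
\]
Summing over the $p$ diagonal entries gives $\operatorname{tr}Q=2p$.

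\textbf{Off-diagonal entries.} Take $\alpha\neq\beta$ and expand
\[
\langle e_{u_\alpha}-e_{v_\alpha},\,e_{u_\beta}-e_{v_\beta}\rangle
\]
into four terms of the form $\pm\langle e_x,e_y\rangle=\pm\delta_{xy}$. A term is nonzero only when an endpoint of $e_\alpha$ coincides with an endpoint of $e_\beta$.
\begin{itemize}
\item If $e_\alpha\cap e_\beta=\varnothing$, no endpoints coincide, so $Q_{\alpha\beta}=0$.
\item The deleted edges are distinct edges of a simple graph, so $e_\alpha\cap e_\beta$ has at most one element. If $|e_\alpha\cap e_\beta|=1$, exactly one of the four Kronecker deltas equals $1$. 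The value is $+1$ when the shared vertex has the same role in both orientations (head--head or tail--tail). It is $-1$ otherwise.
\end{itemize}
This gives the stated case formula.

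\textbf{Independence from orientation.} Reversing the orientation of $e_\alpha$ replaces $b_\alpha$ by $-b_\alpha$. Thus a change of orientations replaces $B$ by $BD$, where $D=\operatorname{diag}(\pm1)$, and hence $Q$ by
\[
DQD=D^{-1}QD,
\]
since $D^{-1}=D$. This is a similarity transformation, so the spectrum of $Q$ is unchanged.

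There is no real obstacle here; the only point requiring care is that the sign of $Q_{\alpha\beta}$ depends on the orientations while its absolute value does not.
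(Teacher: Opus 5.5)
Your proof is correct and complete. The paper quotes this lemma from Tamura's earlier work without reproducing a proof, and your direct computation from $Q=B^{\mathsf T}B$ (inner products of incidence vectors, plus $Q\mapsto DQD$ with $D^2=I$ for reversing orientations) is the standard argument; the switching step is the same observation the paper itself uses in the proof of Lemma~\ref{lem:trace-cube}.
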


The connectivity of $K_n-F$ also yields a bound on the edge-defect
eigenvalues.

\begin{lemma}[\cite{Tamura2026EdgeDefect}, Lemma 2.5]
\label{lem:defect-bound}
Suppose that $K_n-F$ is connected.
Then
\[
nI_p-Q
\]
is positive definite.
Consequently, every eigenvalue $\theta$ of $Q$ satisfies
\[
0\leq\theta<n.
\]
\end{lemma}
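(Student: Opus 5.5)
We need to prove Lemma 2.5: if $K_n-F$ is connected, then $nI_p-Q$ is positive definite, so every eigenvalue $\theta$ of $Q$ satisfies $0\le\theta<n$.

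The plan is to relate $Q=B^{\mathsf T}B$ to the Laplacian of $G=K_n-F$ through the companion matrix $BB^{\mathsf T}$. By construction, $BB^{\mathsf T}=\sum_\alpha b_\alpha b_\alpha^{\mathsf T}=L(H)$, where $H$ is the spanning graph on all $n$ vertices with edge set $F$. Since $L(K_n)=nI_n-J$, we obtain
\[
L(G)=L(K_n)-L(H)=nI_n-J-BB^{\mathsf T}.
\]

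The first step is to restrict attention to the subspace $\mathbf 1^\perp$. Each column $b_\alpha$ is orthogonal to $\mathbf 1$, so $BB^{\mathsf T}$ maps $\mathbf 1^\perp$ into itself and annihilates $\mathbf 1$. Since $J$ vanishes on $\mathbf 1^\perp$, the restriction of $L(G)$ to $\mathbf 1^\perp$ is $nI-BB^{\mathsf T}$. Connectivity of $G$ means $\lambda_2(G)>0$, so $L(G)$ is positive definite on $\mathbf 1^\perp$. Hence every eigenvalue of $BB^{\mathsf T}$ with eigenvector in $\mathbf 1^\perp$ is strictly less than $n$. The only remaining eigenvector of $BB^{\mathsf T}$ is $\mathbf 1$, with eigenvalue $0<n$. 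Therefore every eigenvalue of $BB^{\mathsf T}$ is less than $n$.

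The second step transfers this bound to $Q$. The matrices $B^{\mathsf T}B$ and $BB^{\mathsf T}$ have the same nonzero eigenvalues, so every nonzero eigenvalue of $Q$ is less than $n$, while a zero eigenvalue is trivially less than $n$. Alternatively, one can argue directly: for $x\neq0$ in $\mathbb R^p$ with $y=Bx\neq0$, we have
\[
x^{\mathsf T}Qx\,\|y\|^2=\|y\|^4\le \|B^{\mathsf T}y\|^2\,\|x\|^2,
\]
and $\|B^{\mathsf T}y\|^2=y^{\mathsf T}BB^{\mathsf T}y<n\|y\|^2$. This gives $x^{\mathsf T}Qx<n\|x\|^2$, and the same inequality holds trivially when $Bx=0$.

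This shows that $nI_p-Q$ is positive definite. Lemma~\ref{lem:basic-Q} already gives positive semidefiniteness of $Q$, so $\theta\ge0$, and together these yield $0\le\theta<n$. The only delicate point is the handling of the $\mathbf 1$ direction and the $J$ term, and this is settled by the observation that $\operatorname{range}B\subseteq\mathbf 1^\perp$.
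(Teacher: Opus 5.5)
The paper does not prove this lemma; it quotes it from \cite{Tamura2026EdgeDefect}, so there is no in-text argument to compare against. Your proof is correct and self-contained. The identities $BB^{\mathsf T}=L(H)$ and $L(K_n-F)=nI_n-J-BB^{\mathsf T}$ are right, as is the positivity of $L(G)$ on $\mathbf 1^\perp$. Both of your transfers of the bound to $Q=B^{\mathsf T}B$ also work: via the shared nonzero spectrum, or via your Cauchy--Schwarz estimate using $\operatorname{range}B\subseteq\mathbf 1^\perp$.

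The sentence ``the only remaining eigenvector of $BB^{\mathsf T}$ is $\mathbf 1$'' is loose when $0$ is a repeated eigenvalue. What you actually use is that the symmetric matrix $BB^{\mathsf T}$ leaves both $\operatorname{span}\{\mathbf 1\}$ and $\mathbf 1^\perp$ invariant, so its spectrum is $\{0\}$ together with that of its restriction to $\mathbf 1^\perp$.

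You can bypass the restriction entirely. Since $nI_n-BB^{\mathsf T}=L(G)+J$, you have $y^{\mathsf T}(L(G)+J)y=y^{\mathsf T}L(G)y+(\mathbf 1^{\mathsf T}y)^2>0$ for $y\neq0$ when $G$ is connected. Read backwards, the same computation shows that if $K_n-F$ is disconnected, then $n$ is an eigenvalue of $Q$. So the strict bound is equivalent to connectivity, which is the complement relation $\lambda_{\max}(L(H))=n-\lambda_2(L(K_n-F))$.

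A genuinely different route uses only results the paper already quotes. Let $F_k=\{e_1,\ldots,e_k\}$. Each $K_n-F_k\supseteq K_n-F$ is connected, so Theorem~\ref{thm:tree-defect-formula} gives $\det(nI_k-Q_{F_k})=\tau(K_n-F_k)/n^{\,n-k-2}>0$. These are exactly the leading principal minors of $nI_p-Q$, and Sylvester's criterion finishes. This avoids any spectral comparison between $B^{\mathsf T}B$ and $BB^{\mathsf T}$, but it leans on the determinant formula. That is not circular: the formula follows from $\det(L(G)+J)=n^2\tau(G)$ and $\det(nI_n-BB^{\mathsf T})=n^{\,n-p}\det(nI_p-B^{\mathsf T}B)$ without this lemma. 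Your argument, by contrast, is elementary and makes the mechanism visible.
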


The spectrum of the edge-defect matrix can be described in terms of
the Laplacian spectrum of the deletion graph.

\begin{lemma}[\cite{Tamura2026EdgeDefect}, Lemma 2.6]
\label{lem:defect-spectrum}
Suppose that $H_F$ has $q$ vertices, $p$ edges, and $c$ connected components,
and let its positive Laplacian eigenvalues be
\[
\lambda_1,\ldots,\lambda_{q-c}.
\]
Then the eigenvalues of $Q$ are
\[
\lambda_1,\ldots,\lambda_{q-c}
\]
together with
\[
\underbrace{0,\ldots,0}_{p-q+c\text{ copies}}.
\]
\end{lemma}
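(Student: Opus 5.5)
The natural approach is to realize $Q$ as the Gram matrix of the incidence vectors and compare it with the Laplacian of $H_F$, which is the ``other'' Gram product of the same incidence matrix. Restrict $B$ to the rows indexed by $V(F)$; all other rows are zero. This gives the $q\times p$ oriented incidence matrix $B_H$ of the deletion graph $H_F$. Then
\[
Q=B^{\mathsf T}B=B_H^{\mathsf T}B_H,
\qquad
L(H_F)=B_HB_H^{\mathsf T},
\]
where the second identity is the standard factorization of the Laplacian through an arbitrarily oriented incidence matrix. It holds for any choice of orientations, consistent with Lemma \ref{lem:basic-Q}.

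The key step is the elementary fact that, for any real matrix $M$, the matrices $M^{\mathsf T}M$ and $MM^{\mathsf T}$ have the same nonzero eigenvalues with the same multiplicities. One can see this from the singular value decomposition of $M$, or from the identity $\lambda^{p}\det(\lambda I_q-MM^{\mathsf T})=\lambda^{q}\det(\lambda I_p-M^{\mathsf T}M)$. Applying it with $M=B_H$ shows that the positive eigenvalues of $Q$ are exactly the positive Laplacian eigenvalues of $H_F$, counted with multiplicity.

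It then remains to count these positive eigenvalues. The Laplacian of a graph with $c$ components has $0$ as an eigenvalue with multiplicity exactly $c$. Equivalently, $\operatorname{rank}B_H=q-c$, since the kernel of $B_H^{\mathsf T}$ consists of the vectors that are constant on each component. Hence $L(H_F)$ has precisely $q-c$ positive eigenvalues $\lambda_1,\ldots,\lambda_{q-c}$. The matrix $Q$ has the same rank $q-c$, so its remaining $p-(q-c)=p-q+c$ eigenvalues are $0$, as claimed.

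The only point requiring care is the multiplicity bookkeeping, namely that nonzero eigenvalues transfer with multiplicities and that the number of zero eigenvalues is given by the rank. The determinant identity or the SVD settles this directly, so I do not expect a genuine obstacle.
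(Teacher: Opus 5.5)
Your argument is correct and complete: you write $Q=B_H^{\mathsf T}B_H$ and $L(H_F)=B_HB_H^{\mathsf T}$, transfer the nonzero eigenvalues with multiplicity via the SVD or the determinant identity, and count zeros using $\operatorname{rank}B_H=q-c$. The paper itself gives no proof of this lemma (it quotes it from Tamura's earlier edge-defect paper), and your Gram-matrix argument is the standard proof of this fact.
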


Moreover, a matching deletion is characterized by the edge-defect matrix
as follows.

\begin{proposition}[\cite{Tamura2026EdgeDefect}, Proposition 2.1]
\label{prop:matching-characterization}
The following three conditions are equivalent:
\[
F\text{ is a matching},
\]
\[
H_F\cong pK_2,
\]
\[
Q=2I_p.
\]
\end{proposition}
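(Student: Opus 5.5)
We prove Proposition~\ref{prop:matching-characterization} through the cycle of implications
\[
F\text{ matching}\;\Longrightarrow\; H_F\cong pK_2\;\Longrightarrow\; Q=2I_p\;\Longrightarrow\; F\text{ matching}.
\]
All three steps rest on the entrywise description of $Q$ in Lemma~\ref{lem:basic-Q}.

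\emph{Matching implies $H_F\cong pK_2$.} If $F$ is a matching, the $p$ edges are pairwise vertex-disjoint. Hence $V(F)$ has exactly $2p$ vertices, each of degree one in $H_F$. The components of $H_F$ are therefore exactly the $p$ edges, each a copy of $K_2$, so $H_F\cong pK_2$.

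\emph{$H_F\cong pK_2$ implies $Q=2I_p$.} If $H_F\cong pK_2$, every vertex of $H_F$ has degree one, so distinct deleted edges share no endpoint. By Lemma~\ref{lem:basic-Q}, $Q_{\alpha\beta}=0$ for $\alpha\neq\beta$ and $Q_{\alpha\alpha}=2$. Hence $Q=2I_p$. One can also read this off from $Q_{\alpha\beta}=b_\alpha^{\mathsf T}b_\beta$ directly, since disjoint supports give a zero inner product.

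\emph{$Q=2I_p$ implies $F$ is a matching.} Suppose $Q=2I_p$ and, for contradiction, two distinct deleted edges $e_\alpha,e_\beta$ share a vertex. Distinct simple edges share at most one vertex, so $|e_\alpha\cap e_\beta|=1$. Lemma~\ref{lem:basic-Q} then gives $Q_{\alpha\beta}=\pm1\neq0$, contradicting $Q=2I_p$. So the edges are pairwise disjoint, i.e.\ $F$ is a matching.

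The only point needing care is the last step, where we must note that $|e_\alpha\cap e_\beta|\in\{0,1\}$ for distinct edges of a simple graph. This makes the case split in Lemma~\ref{lem:basic-Q} exhaustive, and then the argument is immediate. As a consistency check, Lemma~\ref{lem:defect-spectrum} agrees with the result: $pK_2$ has $q=2p$ vertices and $c=p$ components, and its positive Laplacian eigenvalues are $p$ copies of $2$. This gives spectrum $\{2\}^p$, which matches $Q=2I_p$.
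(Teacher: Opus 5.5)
Your proof is correct. The whole cycle of implications needs only two facts: the entrywise description $Q_{\alpha\beta}=b_\alpha^{\mathsf T}b_\beta\in\{0,\pm1\}$ from Lemma~\ref{lem:basic-Q}, and the observation that distinct edges of a simple graph share at most one vertex, and you use both. The paper gives no proof of its own here (the proposition is imported from Tamura's earlier paper), so there is nothing to compare against. Your argument is the standard direct one, and the closing spectral check via Lemma~\ref{lem:defect-spectrum} is harmless but not needed.
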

The starting point for our study of the number of spanning trees is
the following spectral formula obtained by Tamura
\cite{Tamura2026EdgeDefect}.
\begin{theorem}[\cite{Tamura2026EdgeDefect}, Theorem 4.2]
\label{thm:tree-defect-formula}
Suppose that $K_n-F$ is connected, and let the eigenvalues of $Q$ be
\[
\theta_1,\ldots,\theta_p.
\]
Then
\[
\tau(K_n-F)
=
n^{\,n-2}
\prod_{\alpha=1}^{p}
\left(
1-\frac{\theta_\alpha}{n}
\right)
=
n^{\,n-p-2}\det(nI_p-Q).
\]
\end{theorem}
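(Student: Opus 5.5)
The plan is to derive the formula from the Matrix--Tree Theorem (Theorem~\ref{thm:matrix-tree}) by relating the Laplacian spectrum of $K_n-F$ to the spectrum of $Q$. The starting observation is that
\[
L(K_n-F)=L(K_n)-BB^{\mathsf T}=nI_n-J-BB^{\mathsf T},
\]
where $J$ is the all-ones matrix. Each column $b_\alpha$ is orthogonal to the all-ones vector $\mathbf{1}$, so $BB^{\mathsf T}$ annihilates $\mathbf{1}$ and maps into $\mathbf{1}^\perp$. On $\mathbf{1}^\perp$ we have $J=0$, so $L(K_n-F)$ acts there as $nI-BB^{\mathsf T}$, and the eigenvalue $0$ belongs to $\mathbf{1}$.

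The first step is to compute the nonzero spectrum of $BB^{\mathsf T}$ restricted to $\mathbf{1}^\perp$. Since $BB^{\mathsf T}$ and $Q=B^{\mathsf T}B$ share the same nonzero eigenvalues with multiplicities, the $n-1$ eigenvalues of $BB^{\mathsf T}|_{\mathbf{1}^\perp}$ are $\theta_1,\ldots,\theta_p$ padded or truncated by zeros. More precisely, the nonzero $\theta_\alpha$ appear exactly once each, and the remaining eigenvalues are $0$. This needs $\operatorname{rank}B\le n-1$, which holds because the range of $B$ lies in $\mathbf{1}^\perp$.

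The step needing the most care is bookkeeping the zero eigenvalues when $p$ exceeds $n-1$ or when $Q$ has a kernel. Factors with $\theta_\alpha=0$ contribute $1-0/n=1$, so extra zeros on either side are harmless. Using this, the Laplacian eigenvalues of $K_n-F$ on $\mathbf{1}^\perp$ are $n-\mu_i$, where the $\mu_i$ run over the spectrum of $BB^{\mathsf T}|_{\mathbf{1}^\perp}$. Theorem~\ref{thm:matrix-tree} then gives
\[
\tau=\frac1n\prod_{i=1}^{n-1}(n-\mu_i)=n^{n-2}\prod_{i=1}^{n-1}\Bigl(1-\frac{\mu_i}{n}\Bigr)=n^{n-2}\prod_{\alpha=1}^{p}\Bigl(1-\frac{\theta_\alpha}{n}\Bigr).
\]
Connectivity, via Lemma~\ref{lem:defect-bound}, guarantees that all these factors are positive, which is consistent with the formula.

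Finally, $\prod_{\alpha=1}^{p}(n-\theta_\alpha)=\det(nI_p-Q)$, and pulling out $n^{-p}$ yields $n^{n-p-2}\det(nI_p-Q)$. Alternatively, one can apply Sylvester's identity $\det(I_n-\tfrac1n BB^{\mathsf T})=\det(I_p-\tfrac1n Q)$, after restricting to $\mathbf{1}^\perp$ by adding $J$, to get the same result more directly.
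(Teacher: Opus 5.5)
Your proof is correct. The paper gives no proof of this statement; it quotes it as Theorem~4.2 of \cite{Tamura2026EdgeDefect}, so there is no internal argument to compare against. Your derivation is the standard self-contained one: write $L(K_n-F)=nI_n-J-BB^{\mathsf T}$, note that $BB^{\mathsf T}$ preserves $\mathbf{1}^\perp$ and has the same nonzero spectrum as $Q=B^{\mathsf T}B$, use $\operatorname{rank}B\le n-1$ to reconcile the zero eigenvalues, and finish with Theorem~\ref{thm:matrix-tree}.

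Two minor remarks. First, the appeal to Lemma~\ref{lem:defect-bound} plays no logical role and can be dropped. In fact your computation reproves that lemma: each nonzero $\theta_\alpha$ gives a Laplacian eigenvalue $n-\theta_\alpha$ of $K_n-F$ on $\mathbf{1}^\perp$, and connectivity forces these to be positive. Second, for the Sylvester variant, the precise fact you need is $\det(L+J)=n^2\tau(G)$ for $L=L(G)$, $G=K_n-F$. This holds because $L+J$ has eigenvalue $n$ on $\mathbf{1}$ and agrees with $L$ on $\mathbf{1}^\perp$. Then $L+J=nI_n-BB^{\mathsf T}$ and $\det(nI_n-BB^{\mathsf T})=n^{n-p}\det(nI_p-Q)$ give the formula with no eigenvalue bookkeeping at all.
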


We next introduce a quantity measuring the local overlap among the deleted
edges.
We use the quantity introduced in Tamura \cite{Tamura2026EdgeDefect}
and called the interaction number in Part I
\cite{TamuraPalacios2026}:
\[
a(F)
=
\sum_{v\in V(H_F)}
\binom{d_{H_F}(v)}{2}.
\]
This is the total number of unordered pairs of deleted edges sharing
a common endpoint.

Furthermore, set
\[
A=Q-2I_p.
\]
The interaction number and the quadratic trace of $A$ are related
as follows.

\begin{lemma}[\cite{Tamura2026EdgeDefect}, Lemma 5.1]
\label{lem:interaction-trace}
\[
\operatorname{tr}A^2
=
2a(F).
\]
\end{lemma}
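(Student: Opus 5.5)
The identity follows by expanding $\operatorname{tr}A^2$ entrywise. Since $A$ is real symmetric, I would write
\[
\operatorname{tr}A^2=\sum_{\alpha,\beta}A_{\alpha\beta}A_{\beta\alpha}=\sum_{\alpha,\beta}A_{\alpha\beta}^2 ,
\]
and then evaluate the entries of $A=Q-2I_p$ using Lemma~\ref{lem:basic-Q}.

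The diagonal terms vanish. Lemma~\ref{lem:basic-Q} gives $Q_{\alpha\alpha}=2$, so $A_{\alpha\alpha}=0$.

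For the off-diagonal terms, $\alpha\neq\beta$, we have $A_{\alpha\beta}=Q_{\alpha\beta}=b_\alpha^{\mathsf T}b_\beta$. Two distinct edges of a simple graph share at most one endpoint. Hence, by Lemma~\ref{lem:basic-Q}, $A_{\alpha\beta}^2$ equals $1$ if $e_\alpha$ and $e_\beta$ share an endpoint and $0$ otherwise. The sign $\pm1$ coming from the orientations disappears on squaring. Therefore
\[
\operatorname{tr}A^2=\#\{(\alpha,\beta):\alpha\neq\beta,\ |e_\alpha\cap e_\beta|=1\}=2\,\#\{\{\alpha,\beta\}:|e_\alpha\cap e_\beta|=1\}.
\]

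The last step identifies the number of unordered adjacent pairs with $a(F)$. I would double count over the shared vertex. Since $|e_\alpha\cap e_\beta|=1$, each such pair meets in exactly one vertex $v\in V(H_F)$. Conversely, at a vertex $v$ the pairs meeting there are the unordered pairs of edges of $H_F$ incident with $v$, and there are $\binom{d_{H_F}(v)}{2}$ of them. Summing over $v$ counts each adjacent pair exactly once, which gives $a(F)$ and hence $\operatorname{tr}A^2=2a(F)$.

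There is no genuine obstacle here. The one point to state explicitly is that distinct edges of a simple graph never share two endpoints. This is what makes each off-diagonal $|Q_{\alpha\beta}|\le1$, and it also guarantees that each adjacent pair is attributed to a unique vertex in the double count.
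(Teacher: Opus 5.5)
Your proof is correct: expanding $\operatorname{tr}A^2=\sum_{\alpha\neq\beta}Q_{\alpha\beta}^2$ and then counting each adjacent pair once at its unique common vertex is the standard argument. The paper only quotes this identity from Tamura's earlier work without proof, but your computation uses the same zero-diagonal entrywise method the paper uses for the cubic analogue, Lemma~\ref{lem:trace-cube}.
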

In Part I, deletion graphs with small interaction number were
completely classified.
We recall the classification needed here.

\begin{proposition}[\cite{TamuraPalacios2026}, Proposition 3.1]
\label{prop:small-interaction}
Suppose that $H_F$ has $p\geq6$ edges.
Then the following hold.

\begin{enumerate}

\item
$a(F)=0$ if and only if $H_F\cong pK_2$.

\item
$a(F)=1$ if and only if $H_F\cong P_3\cup(p-2)K_2$.

\item
$a(F)=2$ if and only if $H_F\cong2P_3\cup(p-4)K_2$ or $H_F\cong P_4\cup(p-3)K_2$.

\item
$a(F)=3$ if and only if $H_F$ is isomorphic to one of
\[
K_3\cup(p-3)K_2, \qquad
3P_3\cup(p-6)K_2, \qquad
P_4\cup P_3\cup(p-5)K_2,
\]
\[
P_5\cup(p-4)K_2, \qquad
K_{1,3}\cup(p-3)K_2.
\]

\end{enumerate}
\end{proposition}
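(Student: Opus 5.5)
The plan is to reinterpret $a(F)$ as an edge count of the line graph and then classify component by component. Since $a(F)$ counts unordered pairs of deleted edges that share an endpoint, it equals $|E(L(H_F))|$, where $L(H_F)$ is the line graph of $H_F$. Write $H_F=C_1\cup\cdots\cup C_k$ for its connected components, with $C_i$ having $m_i$ edges and $\sum_i m_i=p$. Pairs of edges from different components share no vertex, so
\[
a(F)=\sum_{i=1}^{k} a(C_i), \qquad a(C_i)=|E(L(C_i))|.
\]
Each $L(C_i)$ is connected on $m_i$ vertices, hence $a(C_i)\geq m_i-1$, with equality if and only if $L(C_i)$ is a tree. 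In particular, $a(C_i)=0$ forces $m_i=1$, that is, $C_i\cong K_2$. This already proves item 1: $a(F)=0$ if and only if every component is $K_2$, i.e.\ $H_F\cong pK_2$.

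Next I would classify the connected graphs $C$ without isolated vertices having $a(C)=t$ for $t=1,2,3$. The inequality $m\leq t+1$ leaves only a few cases.
\begin{itemize}
\item For $t=1$ we need $m=2$, with the two edges adjacent, so $C\cong P_3$.
\item For $t=2$ we need $m=3$ with $L(C)$ a path on $3$ vertices. This rules out $K_{1,3}$ and $K_3$, whose line graph is $K_3$ with $3$ edges, so $C\cong P_4$.
\item For $t=3$ there are two possibilities. Either $m=3$ and $L(C)\cong K_3$, giving $C\cong K_3$ or $C\cong K_{1,3}$ by a direct check of the connected graphs with three edges. Or $m=4$ and $L(C)$ is a tree on $4$ vertices. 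Since $K_{1,3}$ is not a line graph (line graphs are claw-free), $L(C)\cong P_4$, and this forces $C\cong P_5$; I would verify this by hand rather than invoke Whitney's theorem.
\end{itemize}

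Finally, I would combine the components. Writing $a(F)=\sum_i a(C_i)$ with all remaining components equal to $K_2$, I enumerate the partitions of $t\in\{1,2,3\}$ into the values $a(C_i)$:
\begin{itemize}
\item $1$ gives $P_3$;
\item $2=2$ gives $P_4$, and $2=1+1$ gives $2P_3$;
\item $3=3$ gives $K_3$, $K_{1,3}$, or $P_5$; $3=2+1$ gives $P_4\cup P_3$; and $3=1+1+1$ gives $3P_3$.
\end{itemize}
The padding $K_2$'s make the edge count equal to $p$. The hypothesis $p\geq6$ guarantees that every listed configuration actually fits, the tightest being $3P_3$ with $6$ edges, so the lists are exactly as stated. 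The converse directions are immediate by computing $\sum_v\binom{d(v)}{2}$ for each listed graph.

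The only step needing care is the $t=3$, $m=4$ case, namely ruling out a star line graph and identifying $P_5$. I would handle it directly: a connected $4$-edge graph whose line graph is a tree has no vertex of degree $\geq3$ (such a vertex alone contributes $\geq3$ to $a$, forcing its three edges to be the only adjacencies, which disconnects the fourth edge from the rest in $L(C)$) and no cycle. It is therefore a path with $4$ edges, i.e.\ $P_5$.
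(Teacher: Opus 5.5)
Your proof is correct and complete. The paper itself gives no proof of this proposition; it imports it from Part I, so there is no in-paper argument to compare against, and your argument stands as a self-contained justification.

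The key move is $a(F)=|E(L(H_F))|$ together with the connectedness of $L(C)$ for each component $C$. This gives $m\le a(C)+1$ for a component with $m$ edges, and summing over components shows $H_F$ has at least $p-a(F)$ components. Hence at most $a(F)$ components differ from $K_2$, each with at most $a(F)+1$ edges. This reduces the classification for any fixed value of $a(F)$ to a finite check, which is exactly what extending the ranking beyond the ninth position would require.

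Your viewpoint also matches the paper's closing remark in Section~\ref{sec:log-expansion} that $A=Q-2I_p$ is a signed adjacency matrix of $L(H_F)$. From that remark, $\operatorname{tr}A^2=2|E(L(H_F))|=2a(F)$ (Lemma~\ref{lem:interaction-trace}) is immediate.

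Two small points:
\begin{enumerate}
\item In the $m=4$ case, the claim that $C$ has no cycle deserves a line. A cycle in $C$ yields a cycle in $L(C)$. Alternatively, once maximum degree at most $2$ is established, the only connected $4$-edge candidates are $P_5$ and $C_4$, and $a(C_4)=4$.
\item Once you give the direct degree argument, the appeal to claw-freeness of line graphs is redundant.
\end{enumerate}
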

We now introduce the normalization used throughout the paper.

By Proposition \ref{prop:matching-characterization} and
Theorem \ref{thm:tree-defect-formula},
whenever a matching deletion is realizable,
if $H_F\cong pK_2$, then
\[
\tau(K_n-F)
=
n^{\,n-p-2}(n-2)^p.
\]

Accordingly, we define
\[
T_{n,p}
=
n^{\,n-p-2}(n-2)^p
\]
as the reference value corresponding to the matching case, and define
the normalized spanning-tree count by
\[
\mathcal{R}_{n,p}(F)
=
\frac{\tau(K_n-F)}{T_{n,p}}.
\]

This normalized quantity admits a simple determinant representation
in terms of $A=Q-2I_p$.
\begin{lemma}
\label{lem:normalized-tree}
Suppose that $K_n-F$ is connected.
Then
\[
\mathcal{R}_{n,p}(F)
=
\det\left(
I_p-\frac{A}{n-2}
\right).
\]
Consequently, if the eigenvalues of $A$ are $\mu_1,\ldots,\mu_p$, then
\[
\mathcal{R}_{n,p}(F)
=
\prod_{\alpha=1}^{p}
\left(
1-\frac{\mu_\alpha}{n-2}
\right).
\]
\end{lemma}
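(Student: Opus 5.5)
The plan is to start from the determinant formula of Theorem~\ref{thm:tree-defect-formula}, substitute $Q=A+2I_p$, and factor out the reference value $T_{n,p}$. Since $K_n-F$ is connected, Theorem~\ref{thm:tree-defect-formula} gives
\[
\tau(K_n-F)=n^{\,n-p-2}\det(nI_p-Q).
\]
Writing $Q=A+2I_p$, we have $nI_p-Q=(n-2)I_p-A$. Because $n\geq3$, we have $n-2\neq0$, so we may pull out the scalar factor:
\[
\det\bigl((n-2)I_p-A\bigr)=(n-2)^p\det\left(I_p-\frac{A}{n-2}\right).
\]
Dividing by $T_{n,p}=n^{\,n-p-2}(n-2)^p$ then yields the first identity directly.

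For the eigenvalue form, I would note that $A=Q-2I_p$ is real symmetric by Lemma~\ref{lem:basic-Q}. Its eigenvalues are therefore $\mu_\alpha=\theta_\alpha-2$, where $\theta_\alpha$ are the eigenvalues of $Q$. The matrix $I_p-A/(n-2)$ is diagonalized by the same orthogonal matrix, with eigenvalues $1-\mu_\alpha/(n-2)$. The determinant is the product of these eigenvalues, which gives the product formula.

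As a consistency check, one can equivalently substitute $\theta_\alpha=\mu_\alpha+2$ into the product form of Theorem~\ref{thm:tree-defect-formula}, using
\[
1-\frac{\theta_\alpha}{n}=\frac{n-2}{n}\left(1-\frac{\mu_\alpha}{n-2}\right).
\]
There is no genuine obstacle here; the only point requiring care is that $n-2>0$, so that both the normalization and the division are legitimate. Note that the formula holds even when no matching of size $p$ fits in $K_n$: $T_{n,p}$ is simply defined as a number and need not be realized by an actual matching deletion.
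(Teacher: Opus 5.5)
Your proposal is correct and follows the paper's proof essentially verbatim: rewrite $nI_p-Q=(n-2)I_p-A$, pull out the factor $(n-2)^p$, divide by $T_{n,p}$, and read off the determinant as the product of the eigenvalues $1-\mu_\alpha/(n-2)$. Your extra remarks, that $n-2>0$ justifies the division and that $T_{n,p}$ is simply a number that need not come from a realizable matching deletion, are correct points the paper leaves implicit.
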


\begin{proof}
By Theorem \ref{thm:tree-defect-formula} and
\[
A=Q-2I_p,
\]
we have
\[
nI_p-Q
=
(n-2)I_p-A.
\]
Therefore,
\[
\begin{aligned}
\mathcal{R}_{n,p}(F)
&=
\frac{
n^{\,n-p-2}\det(nI_p-Q)
}{
n^{\,n-p-2}(n-2)^p
}
\\
&=
\frac{
\det\bigl((n-2)I_p-A\bigr)
}{
(n-2)^p
}
\\
&=
\det\left(
I_p-\frac{A}{n-2}
\right).
\end{aligned}
\]
If the eigenvalues of $A$ are
$\mu_1,\ldots,\mu_p$, then expressing the determinant as the product
of the eigenvalues yields
\[
\mathcal{R}_{n,p}(F)
=
\prod_{\alpha=1}^{p}
\left(
1-\frac{\mu_\alpha}{n-2}
\right).
\]
\end{proof}
Hereafter, if $H$ is a deletion graph realizable as a subgraph of $K_n$
and there exists a set of deleted edges $F$ such that $H_F\cong H$, we write
\[
\mathcal{R}_n(H)
=
\mathcal{R}_{n,|E(H)|}(F).
\]
We also write
\[
\tau(K_n-H)
=
\tau(K_n-F).
\]
These quantities depend only on the isomorphism class of the deletion graph
$H$.

We next record several relations concerning the eigenvalues of $A$ that
will be used later.
\begin{lemma}
\label{lem:defect-moments}
Let $\mu_1,\ldots,\mu_p$ be the eigenvalues of $A=Q-2I_p$.
Then
\[
-2\leq\mu_\alpha<n-2
\qquad
(1\leq\alpha\leq p),
\qquad
\sum_{\alpha=1}^{p}\mu_\alpha=0,
\qquad
\sum_{\alpha=1}^{p}\mu_\alpha^2
=2a(F)
\]
hold.
\end{lemma}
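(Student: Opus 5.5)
Since $A=Q-2I_p$, the eigenvalues of $A$ are exactly $\mu_\alpha=\theta_\alpha-2$, where $\theta_1,\ldots,\theta_p$ are the eigenvalues of $Q$. So each of the three claims reduces to a known fact about $Q$, and I would prove them one at a time.

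For the two-sided bound, I would use Lemma \ref{lem:basic-Q}: $Q=B^{\mathsf T}B$ is positive semidefinite, so $\theta_\alpha\geq0$ and hence $\mu_\alpha\geq-2$. Because $K_n-F$ is assumed connected, Lemma \ref{lem:defect-bound} gives $\theta_\alpha<n$, and therefore $\mu_\alpha<n-2$.

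For the trace identity, Lemma \ref{lem:basic-Q} gives $\operatorname{tr}Q=2p$. Then
\[
\sum_{\alpha}\mu_\alpha=\operatorname{tr}A=\operatorname{tr}Q-2p=0.
\]
Equivalently, every diagonal entry $Q_{\alpha\alpha}$ equals $2$, so $A$ has zero diagonal.

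For the second moment, $A$ is real symmetric, so $\sum_\alpha\mu_\alpha^2=\operatorname{tr}A^2$, and Lemma \ref{lem:interaction-trace} identifies this trace with $2a(F)$. If a self-contained check is wanted, $\operatorname{tr}A^2$ is the sum of the squares of the off-diagonal entries of $A$. By Lemma \ref{lem:basic-Q} these entries are $\pm1$ exactly for ordered pairs of deleted edges sharing an endpoint, and $0$ otherwise. Since $H_F$ is simple, two distinct edges share at most one endpoint, so each unordered adjacent pair is counted twice. This gives $2a(F)$.

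None of these steps is a real obstacle. The only point needing care is that the strict upper bound $\mu_\alpha<n-2$ uses the connectivity hypothesis through Lemma \ref{lem:defect-bound}.
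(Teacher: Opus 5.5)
Your proof is correct and follows essentially the same route as the paper: you shift the spectrum of $Q$ by $2$, take the bounds from Lemma~\ref{lem:defect-bound} (your lower bound via positive semidefiniteness is equivalent), use $\operatorname{tr}Q=2p$, and invoke Lemma~\ref{lem:interaction-trace} for $\operatorname{tr}A^2=2a(F)$. Your optional direct count of the off-diagonal entries of $A$ is a valid self-contained check of that last identity.
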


\begin{proof}
Since $\mu_\alpha=\theta_\alpha-2$ and
$0\leq\theta_\alpha<n$ by Lemma \ref{lem:defect-bound},
we have
\[
-2\leq\mu_\alpha<n-2.
\]

Moreover, by Lemma \ref{lem:basic-Q},
\[
\operatorname{tr}Q=2p,
\]
and hence
\[
\operatorname{tr}A
=
\operatorname{tr}(Q-2I_p)
=
0.
\]
Therefore,
\[
\sum_{\alpha=1}^{p}\mu_\alpha=0.
\]

Furthermore, by Lemma \ref{lem:interaction-trace},
\[
\operatorname{tr}A^2=2a(F).
\]
Since $A$ is a real symmetric matrix,
\[
\operatorname{tr}A^2
=
\sum_{\alpha=1}^{p}\mu_\alpha^2.
\]
Thus,
\[
\sum_{\alpha=1}^{p}\mu_\alpha^2
=
2a(F).
\]
\end{proof}

\begin{lemma}
\label{lem:multiplicativity}
Suppose that $H_1$ and $H_2$ are vertex-disjoint deletion graphs.
Then
\[
\mathcal{R}_n(H_1\cup H_2)
=
\mathcal{R}_n(H_1)
\mathcal{R}_n(H_2).
\]
In particular, $\mathcal{R}_n(K_2)=1$.
\end{lemma}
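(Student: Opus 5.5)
The plan is to show that the edge-defect matrix of a vertex-disjoint union is block diagonal, and then read off both claims from Lemma \ref{lem:normalized-tree}. Let $H_1$ and $H_2$ be vertex-disjoint deletion graphs with $p_1$ and $p_2$ edges, and let $F=F_1\cup F_2$ realize $H_1\cup H_2$ in $K_n$. Order the edges of $F$ so that those of $F_1$ come first.

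By Lemma \ref{lem:basic-Q}, the entry $Q_{\alpha\beta}$ vanishes whenever $e_\alpha\cap e_\beta=\varnothing$. Every edge of $F_1$ is disjoint from every edge of $F_2$, so
\[
Q_F=\begin{pmatrix}Q_{F_1}&0\\0&Q_{F_2}\end{pmatrix}.
\]
Subtracting $2I_p$ block by block gives $A_F=\operatorname{diag}(A_{F_1},A_{F_2})$, where $A_{F_i}=Q_{F_i}-2I_{p_i}$. The determinant of a block-diagonal matrix is the product of the determinants of its blocks, so
\[
\det\left(I_p-\frac{A_F}{n-2}\right)
=\det\left(I_{p_1}-\frac{A_{F_1}}{n-2}\right)\det\left(I_{p_2}-\frac{A_{F_2}}{n-2}\right).
\]
By Lemma \ref{lem:normalized-tree}, this identity is exactly $\mathcal{R}_n(H_1\cup H_2)=\mathcal{R}_n(H_1)\mathcal{R}_n(H_2)$.

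The one step needing care is applying Lemma \ref{lem:normalized-tree} to $F_1$ and $F_2$ separately, since that lemma assumes $K_n-F_i$ is connected. This follows because $K_n-F_i$ is a spanning supergraph of $K_n-F$, which is connected by hypothesis. Alternatively, one can define $\mathcal{R}_n(H_i)$ directly as $\det(I-A_{F_i}/(n-2))$, so that the identity holds at the level of determinants. I would also note that $\mathcal{R}_n(H_i)$ is well defined independently of the realization: by Lemma \ref{lem:defect-spectrum} the spectrum of $Q$, and hence of $A$, depends only on the isomorphism class of $H_i$.

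For $K_2$, a single edge gives $Q=(2)$, so $A=(0)$ and $\mathcal{R}_n(K_2)=\det(1)=1$. This agrees with Proposition \ref{prop:matching-characterization}.
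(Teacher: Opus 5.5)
Your proposal is correct and follows essentially the same route as the paper. You order the edges so that $Q$, and hence $A$, is block diagonal, factor $\det\bigl(I-A/(n-2)\bigr)$ using Lemma~\ref{lem:normalized-tree}, and read off $A(K_2)=(0)$; your remark that $K_n-F_i$ is connected, being a spanning supergraph of $K_n-F$, supplies a hypothesis check the paper leaves implicit.
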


\begin{proof}
Since $H_1$ and $H_2$ are vertex-disjoint,
by choosing a suitable ordering of the edges, the corresponding
edge-defect matrix is block diagonal.
Therefore,
\[
A(H_1\cup H_2)
=
A(H_1)\oplus A(H_2).
\]
By Lemma \ref{lem:normalized-tree},
\[
\begin{aligned}
\mathcal{R}_n(H_1\cup H_2)
&=
\det\left(
I-\frac{A(H_1\cup H_2)}{n-2}
\right)
\\
&=
\det\left(
I-\frac{A(H_1)}{n-2}
\right)
\det\left(
I-\frac{A(H_2)}{n-2}
\right)
\\
&=
\mathcal{R}_n(H_1)
\mathcal{R}_n(H_2).
\end{aligned}
\]

Moreover, since $Q(K_2)=(2)$,
\[
A(K_2)=(0).
\]
Therefore,
\[
\mathcal{R}_n(K_2)=1.
\]
\end{proof}

In the next section, we use these relations to obtain a quantitative estimate
for the decrease in the number of spanning trees from the matching-deletion
case.

\section{Stability of the normalized spanning tree count}
\label{sec:stability}

In this section, we use the interaction number $a(F)$ to obtain a
quantitative estimate for the decrease in the normalized spanning-tree count
from the matching case.

We continue to use the notation introduced in the preceding section.
We first establish an elementary inequality needed to estimate the product
representation in Lemma \ref{lem:normalized-tree}.

\begin{lemma}
\label{lem:log-bound}

Let $r>0$, and define
\[
c_r
=
\frac{r-\log(1+r)}{r^2}.
\]
Then, for every $-r\leq t<1$,
\[
\log(1-t)
\leq
-t-c_rt^2.
\]

\end{lemma}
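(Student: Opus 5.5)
We want: for $-r\le t<1$, $\log(1-t)\le -t-c_r t^2$. We set
\[
g(t)=\frac{-t-\log(1-t)}{t^2}\quad(t\neq0),\qquad g(0)=\tfrac12,
\]
so the claim is equivalent to $g(t)\ge c_r$ on $[-r,1)$. For $t=0$ the inequality reads $0\le 0$ and holds trivially. We note that $g(-r)=\frac{r-\log(1+r)}{r^2}=c_r$. Hence it suffices to show that $g$ is nondecreasing on $(-\infty,1)$, or at least on $[-r,1)$. Then $g(t)\ge g(-r)=c_r$ for all $t\in[-r,1)$, which gives the lemma after multiplying by $t^2\ge0$.

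To establish monotonicity, we use the power series: for $|t|<1$,
\[
g(t)=\sum_{k\ge2}\frac{t^{k-2}}{k}.
\]
This representation alone does not immediately give monotonicity for negative $t$, so we instead use an integral representation valid for all $t<1$. Since $-t-\log(1-t)=\int_0^t \frac{s}{1-s}\,ds$, the substitution $s=tu$ gives
\[
g(t)=\int_0^1\frac{u}{1-tu}\,du .
\]
For each fixed $u\in[0,1]$, the integrand $u/(1-tu)$ is positive and increasing in $t$ on $t<1$, because $1-tu>0$ there and its derivative in $t$ is $u^2/(1-tu)^2\ge0$. Therefore $g$ is nondecreasing on $(-\infty,1)$. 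The representation also holds at $t=0$, where it gives $\tfrac12$.

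The only delicate point is justifying the integral formula for negative $t$ and near $t=1$. This is elementary: the identity $\frac{d}{dt}\bigl(-t-\log(1-t)\bigr)=\frac{t}{1-t}$ holds for all $t<1$, and the change of variables is valid for $t\ne0$. As a sanity check, we also note that $c_r\in(0,\tfrac12)$ for $r>0$, which is consistent with $g$ increasing from values below $\tfrac12$ at negative arguments.
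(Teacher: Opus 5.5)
Your proof is correct, and it follows a cleaner route than the paper's. The paper splits into two cases with different tools. For $0<t<1$ it bounds $\int_0^t \frac{s}{1-s}\,ds\ge t^2/2$ and separately shows $c_r<\frac12$ via $\int_0^r\frac{s}{1+s}\,ds<r^2/2$. For $-r\le t<0$ it substitutes $t=-s$ and proves $\phi(s)=(s-\log(1+s))/s^2$ is decreasing, by computing $\phi'$ and checking the sign of the auxiliary function $h(s)=2s+s^2-2(1+s)\log(1+s)$. Your rescaling $s=tu$ turns the same integral into $g(t)=\int_0^1\frac{u}{1-tu}\,du$ over a fixed interval. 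There, monotonicity in $t$ is visible pointwise in the integrand, so one argument covers all of $(-\infty,1)$. The positive case, the negative case, and the comparison $c_r=g(-r)<g(0)=\frac12$ all follow at once, with no quotient differentiation or auxiliary function. Your formulation also shows directly that $c_r$ is the optimal constant (equality at $t=-r$) and that $c_r>0$. The paper's argument is elementary calculus too, but it is longer and treats the two sides of $0$ by unrelated means. Your power-series remark is not used and can be dropped.
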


\begin{proof}

First, suppose that $0<t<1$.
Then
\[
-t-\log(1-t)
=
\int_0^t\frac{s}{1-s}\,ds
\geq
\int_0^t s\,ds
=
\frac{t^2}{2}.
\]

On the other hand,
\[
r-\log(1+r)
=
\int_0^r\frac{s}{1+s}\,ds
<
\int_0^r s\,ds
=
\frac{r^2}{2},
\]
and hence
\[
c_r<\frac12.
\]
Therefore,
\[
-t-\log(1-t)
\geq
c_rt^2,
\]
or equivalently,
\[
\log(1-t)
\leq
-t-c_rt^2.
\]

Next, suppose that $-r\leq t<0$.
Writing $t=-s$, we have $0<s\leq r$. 

Define
\[
\phi(s)
=
\frac{s-\log(1+s)}{s^2}
\qquad
(s>0).
\]

A direct calculation gives
\[
\phi'(s)
=
\frac{
2(1+s)\log(1+s)-2s-s^2
}{
s^3(1+s)
}.
\]

Now define
\[
h(s)
=
2s+s^2-2(1+s)\log(1+s).
\]
Then $h(0)=0$, and
\[
h'(s)
=
2s-2\log(1+s)>0
\qquad
(s>0).
\]
Therefore,
\[
h(s)>0
\qquad
(s>0),
\]
and hence
\[
\phi'(s)<0.
\]

Thus, $\phi$ is decreasing, and hence, for $0<s\leq r$,
\[
\phi(s)
\geq
\phi(r)
=
\frac{r-\log(1+r)}{r^2}
=
c_r.
\]
Therefore,
\[
s-\log(1+s)
\geq
c_rs^2.
\]
Returning to $t=-s$, we obtain
\[
\log(1-t)
\leq
-t-c_rt^2.
\]

The case $t=0$ is immediate.

\end{proof}

The following theorem provides the basic stability estimate used in this paper.

\begin{theorem}
\label{thm:tree-stability}

Suppose that $K_n-F$ is connected.
Then
\[
\log\mathcal{R}_{n,p}(F)
\leq
-\frac{a(F)}{2}
\left\{
\frac{2}{n-2}
-
\log\left(\frac{n}{n-2}\right)
\right\}.
\]

Consequently,
\[
\mathcal{R}_{n,p}(F)
\leq
\left(
\frac{n}{n-2}
\right)^{a(F)/2}
\exp\left(
-\frac{a(F)}{n-2}
\right).
\]

\end{theorem}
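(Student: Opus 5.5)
We apply Lemma \ref{lem:log-bound} to each factor in the product representation of Lemma \ref{lem:normalized-tree}. Set
\[
t_\alpha=\frac{\mu_\alpha}{n-2}\qquad(1\leq\alpha\leq p).
\]
By Lemma \ref{lem:defect-moments}, $-2\leq\mu_\alpha<n-2$, so
\[
-\frac{2}{n-2}\leq t_\alpha<1 .
\]
This lets us take $r=2/(n-2)$ in Lemma \ref{lem:log-bound}. We need $n\geq3$, which holds, so that $r>0$. Since $K_n-F$ is connected, every factor $1-t_\alpha$ is positive, so the logarithm is defined. Summing the inequalities gives
\[
\log\mathcal{R}_{n,p}(F)=\sum_{\alpha}\log(1-t_\alpha)\leq-\sum_\alpha t_\alpha-c_r\sum_\alpha t_\alpha^2 .
\]

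The trace identities of Lemma \ref{lem:defect-moments} then do the work. Since $\sum_\alpha\mu_\alpha=0$, the linear term vanishes. Since $\sum_\alpha\mu_\alpha^2=2a(F)$, the quadratic sum equals $2a(F)/(n-2)^2$. Hence
\[
\log\mathcal{R}_{n,p}(F)\leq -\frac{2a(F)}{(n-2)^2}\,c_r .
\]

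It remains to evaluate the constant. With $r=2/(n-2)$ we have $1+r=n/(n-2)$ and $r^2=4/(n-2)^2$, so
\[
c_r=\frac{(n-2)^2}{4}\left\{\frac{2}{n-2}-\log\frac{n}{n-2}\right\}.
\]
Substituting gives
\[
-\frac{2a(F)}{(n-2)^2}\cdot\frac{(n-2)^2}{4}\{\cdots\}=-\frac{a(F)}{2}\left\{\frac{2}{n-2}-\log\frac{n}{n-2}\right\},
\]
which is the first assertion. Exponentiating yields
\[
\mathcal{R}_{n,p}(F)\leq\left(\frac{n}{n-2}\right)^{a(F)/2}\exp\!\left(-\frac{a(F)}{n-2}\right).
\]

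There is no real obstacle here. The only delicate point is choosing $r$ so that the whole range of the $t_\alpha$ is covered. The lower bound $\mu_\alpha\geq-2$ comes from $Q$ being positive semidefinite, and the strict upper bound $t_\alpha<1$ comes from connectivity via Lemma \ref{lem:defect-bound}. These two bounds are exactly what make a single constant $c_r$ valid uniformly for all $\alpha$.
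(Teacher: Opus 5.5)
Your proof is correct and follows essentially the same route as the paper. You apply Lemma~\ref{lem:log-bound} with $r=2/(n-2)$ and $t=\mu_\alpha/(n-2)$ to each eigenvalue factor, sum, cancel the linear term via $\operatorname{tr}A=0$, use $\operatorname{tr}A^2=2a(F)$, and then evaluate $c_{2/(n-2)}$; your constant computation and the final exponentiation are both right.
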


\begin{proof}
Set $x=n-2$.
By Lemma \ref{lem:defect-moments},
\[
-2\leq\mu_\alpha<x.
\]
Therefore,
\[
-\frac{2}{x}
\leq
\frac{\mu_\alpha}{x}
<1.
\]

Applying Lemma \ref{lem:log-bound} with $r=2/x$ and $t=\mu_\alpha/x$, we obtain
\[
\log\left(
1-\frac{\mu_\alpha}{x}
\right)
\leq
-\frac{\mu_\alpha}{x}
-
c_{2/x}\frac{\mu_\alpha^2}{x^2}.
\]

Summing over
$\alpha=1,\ldots,p$,
we obtain
\[
\begin{aligned}
\log\mathcal{R}_{n,p}(F)
&=
\sum_{\alpha=1}^{p}
\log\left(
1-\frac{\mu_\alpha}{x}
\right)
\\
&\leq
-\frac1x
\sum_{\alpha=1}^{p}\mu_\alpha
-
\frac{c_{2/x}}{x^2}
\sum_{\alpha=1}^{p}\mu_\alpha^2.
\end{aligned}
\]

By Lemma \ref{lem:defect-moments},
\[
\sum_{\alpha=1}^{p}\mu_\alpha=0,
\qquad
\sum_{\alpha=1}^{p}\mu_\alpha^2=2a(F),
\]
and hence
\[
\log\mathcal{R}_{n,p}(F)
\leq
-\frac{2a(F)c_{2/x}}{x^2}.
\]

Now,
\[
c_{2/x}
=
\frac{
\frac2x-\log\left(1+\frac2x\right)
}{
\frac4{x^2}
}.
\]
Therefore,
\[
\log\mathcal{R}_{n,p}(F)
\leq
-\frac{a(F)}{2}
\left\{
\frac2x
-
\log\left(1+\frac2x\right)
\right\}.
\]

Since $x=n-2$,
\[
1+\frac2x
=
\frac{n}{n-2}.
\]
Thus,
\[
\log\mathcal{R}_{n,p}(F)
\leq
-\frac{a(F)}{2}
\left\{
\frac{2}{n-2}
-
\log\left(\frac{n}{n-2}\right)
\right\}.
\]

Exponentiating both sides yields
\[
\mathcal{R}_{n,p}(F)
\leq
\left(
\frac{n}{n-2}
\right)^{a(F)/2}
\exp\left(
-\frac{a(F)}{n-2}
\right).
\]

\end{proof}

For the proof of the main theorem, the following slightly weaker but simpler
estimate will be convenient.

\begin{corollary}
\label{cor:simple-stability}

Suppose that $K_n-F$ is connected.
Then
\[
\mathcal{R}_{n,p}(F)
\leq
\exp\left(
-\frac{a(F)}{n(n-2)}
\right).
\]

\end{corollary}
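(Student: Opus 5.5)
We deduce the corollary directly from the logarithmic form of Theorem~\ref{thm:tree-stability}. It suffices to show that
\[
\frac{2}{n-2}-\log\left(\frac{n}{n-2}\right)\geq\frac{2}{n(n-2)}.
\]
Multiplying by $-a(F)/2\leq0$ then gives $\log\mathcal{R}_{n,p}(F)\leq -a(F)/(n(n-2))$, and exponentiating yields the claim. Since $a(F)\geq0$, the direction of the inequality is preserved, and the case $a(F)=0$ is trivial.

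To prove the displayed inequality, we rewrite $\log(n/(n-2))=-\log(1-2/n)$ and bound it from above. With $u=2/n\in(0,1)$ (recall $n\geq3$), we use the elementary estimate
\[
-\log(1-u)\leq u+\frac{u^2}{2(1-u)}.
\]
This follows from $-\log(1-u)=\int_0^u\frac{ds}{1-s}=u+\int_0^u\frac{s}{1-s}\,ds$ together with $\frac{s}{1-s}\leq\frac{s}{1-u}$ for $0\leq s\leq u$.

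Substituting $u=2/n$ gives $u^2/(2(1-u))=\frac{4/n^2}{2(n-2)/n}=\frac{2}{n(n-2)}$. Hence
\[
\log\left(\frac{n}{n-2}\right)\leq\frac{2}{n}+\frac{2}{n(n-2)}.
\]
Since $\frac{2}{n-2}-\frac{2}{n}=\frac{4}{n(n-2)}$, it follows that
\[
\frac{2}{n-2}-\log\left(\frac{n}{n-2}\right)\geq\frac{4}{n(n-2)}-\frac{2}{n(n-2)}=\frac{2}{n(n-2)},
\]
which is exactly what is needed.

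The only step requiring care is choosing a sharp enough upper bound for the logarithm. The cruder bound $-\log(1-u)\leq u/(1-u)$ would give $\log(n/(n-2))\leq 2/(n-2)$, and the resulting lower bound would be zero. The second-order integral estimate above is therefore the key point.
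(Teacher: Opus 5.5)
Your proof is correct and follows the paper's route: you reduce to the logarithmic form of Theorem~\ref{thm:tree-stability} and bound the bracket below by $\frac{2}{n(n-2)}$ with an integral comparison of the logarithm. The only difference is bookkeeping. The paper takes $r=2/(n-2)$ and uses $\int_0^r\frac{s}{1+s}\,ds\geq\frac{r^2}{2(1+r)}$, while you take $u=2/n$ and use the mirror-image upper bound on $-\log(1-u)$, which gives exactly the same constant.
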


\begin{proof}

For every $r>0$,
\[
r-\log(1+r)
=
\int_0^r\frac{s}{1+s}\,ds.
\]
Since
\[
\frac{s}{1+s}
\geq
\frac{s}{1+r}
\]
for $0\leq s\leq r$, we have
\[
r-\log(1+r)
\geq
\frac1{1+r}\int_0^r s\,ds
=
\frac{r^2}{2(1+r)}.
\]

Taking $r=2/(n-2)$ gives
\[
\frac12
\left\{
\frac{2}{n-2}
-
\log\left(\frac{n}{n-2}\right)
\right\}
\geq
\frac{1}{n(n-2)}.
\]

Therefore, by Theorem \ref{thm:tree-stability},
\[
\log\mathcal{R}_{n,p}(F)
\leq
-\frac{a(F)}{n(n-2)}.
\]
Hence,
\[
\mathcal{R}_{n,p}(F)
\leq
\exp\left(
-\frac{a(F)}{n(n-2)}
\right).
\]

\end{proof}

In particular, $\mathcal{R}_{n,p}(F)\leq1$.
Moreover, equality holds if and only if $a(F)=0$, equivalently,
if and only if $H_F\cong pK_2$.

\section{Normalized spanning-tree counts and rankings of the candidate graphs}
\label{sec:candidates}

Set $x=n-2$.
By Lemma \ref{lem:multiplicativity},
the normalized spanning-tree count is multiplicative
with respect to disjoint unions, and
\[
\mathcal{R}_n(K_2)=1.
\]
Hence the isolated $K_2$ components do not affect the normalized count.
It therefore suffices to compute
\[
\mathcal{R}_n(P_3),\qquad
\mathcal{R}_n(P_4),\qquad
\mathcal{R}_n(K_3),\qquad
\mathcal{R}_n(P_5),\qquad
\mathcal{R}_n(K_{1,3}).
\]

\begin{proposition}
\label{prop:basic-ratios}
Let $n\geq5$, so that all the deletion graphs considered below
are realizable as subgraphs of $K_n$, and set $x=n-2$.
Then
\[
\mathcal{R}_n(P_3)
=1-\frac{1}{x^2},
\qquad
\mathcal{R}_n(P_4)
=1-\frac{2}{x^2},
\qquad
\mathcal{R}_n(K_3)
=1-\frac{3}{x^2}+\frac{2}{x^3},
\]
\[
\mathcal{R}_n(P_5)
=1-\frac{3}{x^2}+\frac{1}{x^4},
\qquad
\mathcal{R}_n(K_{1,3})
=1-\frac{3}{x^2}-\frac{2}{x^3}.
\]

\end{proposition}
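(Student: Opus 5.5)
The plan is to compute each normalized count through Lemma \ref{lem:normalized-tree}:
\[
\mathcal{R}_n(H)=\prod_{\alpha}\Bigl(1-\frac{\mu_\alpha}{x}\Bigr),\qquad \mu_\alpha=\theta_\alpha-2,
\]
where the edge-defect eigenvalues $\theta_\alpha$ come from Lemma \ref{lem:defect-spectrum}. For each of the five small deletion graphs $H$, I will read off the positive Laplacian eigenvalues of $H$ and pad with $p-q+c$ zeros to get the spectrum of $Q$. Subtracting $2$ gives the spectrum of $A$, and I then expand the product in powers of $1/x$. A convenient consistency check is Lemma \ref{lem:defect-moments}: $\sum\mu_\alpha=0$ and $\sum\mu_\alpha^2=2a(F)$. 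This forces the coefficient of $1/x$ to vanish and the coefficient of $1/x^2$ to be $-a(F)$, matching the values $a=1,2,3,3,3$.

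For $n\geq5$, each graph embeds in $K_n$, and $K_n-H$ is connected, since a graph on at most $4$ edges cannot disconnect $K_n$ for $n\geq5$. So Lemma \ref{lem:normalized-tree} applies. The case computations are as follows.
\begin{itemize}
\item $P_3$: the Laplacian spectrum is $\{0,1,3\}$ and $p-q+c=0$. So $Q$ has spectrum $\{1,3\}$, $A$ has spectrum $\{-1,1\}$, and $\mathcal{R}_n(P_3)=(1+1/x)(1-1/x)=1-1/x^2$.
\item $P_4$: the Laplacian spectrum is $\{0,2-\sqrt2,2,2+\sqrt2\}$. So $A$ has spectrum $\{-\sqrt2,0,\sqrt2\}$, which gives $1-2/x^2$.
\item $K_3$: the Laplacian spectrum is $\{0,3,3\}$ with one padding zero ($p=q=3$, $c=1$). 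So $Q$ has spectrum $\{3,3,0\}$, $A$ has spectrum $\{1,1,-2\}$, and $(1-1/x)^2(1+2/x)=1-3/x^2+2/x^3$.
\item $K_{1,3}$: the Laplacian spectrum is $\{0,1,1,4\}$. So $A$ has spectrum $\{-1,-1,2\}$, and $(1+1/x)^2(1-2/x)=1-3/x^2-2/x^3$.
\end{itemize}

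The only step needing a little care is $P_5$, whose Laplacian eigenvalues are $2-2\cos(k\pi/5)$ for $k=1,\dots,4$. Then $A$ has eigenvalues $-2\cos(k\pi/5)$, which come in pairs $\pm2\cos(\pi/5)$ and $\pm2\cos(2\pi/5)$. Hence
\[
\mathcal{R}_n(P_5)=\Bigl(1-\frac{4\cos^2(\pi/5)}{x^2}\Bigr)\Bigl(1-\frac{4\cos^2(2\pi/5)}{x^2}\Bigr).
\]
I will use the standard identities $4\cos^2(\pi/5)+4\cos^2(2\pi/5)=3$ and $4\cos(\pi/5)\cos(2\pi/5)=1$, which follow from $\cos(\pi/5)=(1+\sqrt5)/4$ and $\cos(2\pi/5)=(\sqrt5-1)/4$. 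These give $1-3/x^2+1/x^4$.

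To avoid trigonometry altogether, I could instead compute $\det(tI-A)$ directly. With consistent orientations along a path, $A$ is the tridiagonal matrix with zero diagonal and $\pm1$ off-diagonal entries. Its characteristic polynomial is the Chebyshev-type polynomial $t^4-3t^2+1$, and evaluating $x^{-4}\det(xI-A)$ gives the claimed value immediately. The same direct-determinant route would also serve as an independent check for the other four cases.
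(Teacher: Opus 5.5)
Your proposal is correct and follows essentially the same route as the paper: you obtain the spectrum of $Q$ from the Laplacian spectrum of the deletion graph via Lemma~\ref{lem:defect-spectrum}, shift by $2$, and expand $\prod_\alpha(1-\mu_\alpha/x)$; your trigonometric form of the $P_5$ eigenvalues is just a rewriting of the paper's explicit values $(3\pm\sqrt5)/2$ and $(5\pm\sqrt5)/2$. One side remark is inaccurate, though the conclusion stands: a $4$-edge deletion \emph{can} disconnect $K_5$ (the star $K_{1,4}$), so the correct justification is that $P_3,P_4,K_3,K_{1,3}$ have at most $3<n-1$ edges, and for $P_5$ with $n=5$ the only $4$-edge cuts of $K_5$ are full vertex stars, which $P_5$ (maximum degree $2$) does not contain.
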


\begin{proof}

By Lemma \ref{lem:defect-spectrum},
the nonzero eigenvalues of the edge-defect matrix coincide with
the nonzero Laplacian eigenvalues of the corresponding deletion graph.

First, the nonzero Laplacian eigenvalues of $P_3$ are $\{ 1,\ 3\}$.
Therefore, the eigenvalues of $A=Q-2I$ are $\{-1,\ 1\}$, and hence,
by Lemma \ref{lem:normalized-tree},
\[
\begin{aligned}
\mathcal{R}_n(P_3)
&=
\left(1+\frac1x\right)
\left(1-\frac1x\right)
\\
&=
1-\frac1{x^2}.
\end{aligned}
\]

Next, the nonzero Laplacian eigenvalues of $P_4$ are $\{2-\sqrt2,\ 2,\ 2+\sqrt2\}$.
Therefore, the eigenvalues of $A$ are $\{-\sqrt2,\ 0,\ \sqrt2\}$, and
\[
\begin{aligned}
\mathcal{R}_n(P_4)
&=
\left(1+\frac{\sqrt2}{x}\right)
\left(1-\frac{\sqrt2}{x}\right)
\\
&=
1-\frac2{x^2}.
\end{aligned}
\]

The Laplacian eigenvalues of $K_3$ are $\{0,\ 3,\ 3\}$.
Since $K_3$ has $3$ edges,
Lemma \ref{lem:defect-spectrum} implies that
the eigenvalues of $Q$ are $\{0,\ 3,\ 3\}$.
Therefore, the eigenvalues of $A$ are $\{-2,\ 1,\ 1\}$,
and
\[
\begin{aligned}
\mathcal{R}_n(K_3)
&=
\left(1+\frac2x\right)
\left(1-\frac1x\right)^2
\\
&=
1-\frac3{x^2}+\frac2{x^3}.
\end{aligned}
\]

Next, the nonzero Laplacian eigenvalues of $P_5$ are
\[
\frac{3-\sqrt5}{2},
\qquad
\frac{5-\sqrt5}{2},
\qquad
\frac{3+\sqrt5}{2},
\qquad
\frac{5+\sqrt5}{2}.
\]
Therefore, the eigenvalues of $A$ are
\[
-\frac{1+\sqrt5}{2},
\qquad
\frac{1-\sqrt5}{2},
\qquad
\frac{\sqrt5-1}{2},
\qquad
\frac{1+\sqrt5}{2}.
\]
Using these eigenvalues, we obtain
\[
\begin{aligned}
\mathcal{R}_n(P_5)
&=
\prod_{\alpha=1}^{4}
\left(
1-\frac{\mu_\alpha}{x}
\right)
\\
&=
\left(
1-\frac{3+\sqrt5}{2x^2}
\right)
\left(
1-\frac{3-\sqrt5}{2x^2}
\right)
\\
&=
1-\frac3{x^2}+\frac1{x^4}.
\end{aligned}
\]

Finally, the nonzero Laplacian eigenvalues of $K_{1,3}$ are $\{1,\ 1,\ 4\}$.
Therefore, the eigenvalues of $A$ are $\{-1,\ -1,\ 2\}$, and
\[
\begin{aligned}
\mathcal{R}_n(K_{1,3})
&=
\left(1+\frac1x\right)^2
\left(1-\frac2x\right)
\\
&=
1-\frac3{x^2}-\frac2{x^3}.
\end{aligned}
\]

This proves the proposition.

\end{proof}

Combining Proposition \ref{prop:small-interaction} with
Lemma \ref{lem:multiplicativity},
we obtain the normalized spanning-tree counts of all deletion graphs
satisfying $a(F)\leq3$.

\begin{corollary}
\label{cor:nine-ratios}

Let $p\geq6$ and $n\geq2p$, and set $x=n-2$.
Consider the following nine deletion graphs:
\[
\begin{array}{
r@{\;}c@{\;}l
@{\qquad\qquad}
r@{\;}c@{\;}l
@{\qquad\qquad}
r@{\;}c@{\;}l
}
H_1 & = & pK_2
&
H_2 & = & P_3\cup(p-2)K_2
&
H_3 & = & 2P_3\cup(p-4)K_2
\\[10pt]
H_4 & = & P_4\cup(p-3)K_2
&
H_5 & = & K_3\cup(p-3)K_2
&
H_6 & = & 3P_3\cup(p-6)K_2
\\[10pt]
H_7 & = & P_4\cup P_3\cup(p-5)K_2
&
H_8 & = & P_5\cup(p-4)K_2
&
H_9 & = & K_{1,3}\cup(p-3)K_2
\end{array}
\]
Then
\[
\begin{array}{
r@{\;}c@{\;}l
@{\quad\quad}
r@{\;}c@{\;}l
@{\quad\quad}
r@{\;}c@{\;}l
}
\mathcal{R}_n(H_1) & = & 1
&
\mathcal{R}_n(H_2) & = & 1-\dfrac1{x^2}
&
\mathcal{R}_n(H_3) & = & \left(1-\dfrac1{x^2}\right)^2
\\[12pt]
\mathcal{R}_n(H_4) & = & 1-\dfrac2{x^2}
&
\mathcal{R}_n(H_5) & = & 1-\dfrac3{x^2}+\dfrac2{x^3}
&
\mathcal{R}_n(H_6) & = & \left(1-\dfrac1{x^2}\right)^3
\\[12pt]
\mathcal{R}_n(H_7) & = &
\left(1-\dfrac2{x^2}\right)
\left(1-\dfrac1{x^2}\right)
&
\mathcal{R}_n(H_8) & = & 1-\dfrac3{x^2}+\dfrac1{x^4}
&
\mathcal{R}_n(H_9) & = & 1-\dfrac3{x^2}-\dfrac2{x^3}
\end{array}
\]
\end{corollary}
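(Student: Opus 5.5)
The corollary follows by combining the multiplicativity of Lemma \ref{lem:multiplicativity} with the explicit values in Proposition \ref{prop:basic-ratios}. The only point needing care is realizability: each $H_i$ must actually occur as a deletion graph in $K_n$ with $K_n-F$ connected, so that $\mathcal{R}_n(H_i)$ is defined and Lemma \ref{lem:normalized-tree} applies.

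\emph{Realizability.} The graph $H_i$ has at most $2p$ vertices; the maximum $2p$ is attained by $pK_2$, and the graphs $H_2,\dots,H_9$ have fewer. Since $n\geq 2p$, each $H_i$ embeds in $K_n$.

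\emph{Connectivity.} Every vertex of $H_i$ has degree at most $3$ in $H_i$. Hence each vertex of $K_n-F$ has degree at least $n-4\geq 2p-4\geq 8$.

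\begin{itemize}
\item If $n\geq 12$, this degree exceeds $n/2-1$ when $n-4\geq n/2$. Any two nonadjacent vertices then have a common neighbour, so $K_n-F$ is connected.
\item Since $p\geq 6$ forces $n\geq 12$, this case always holds. (Alternatively, the complement of a graph with maximum degree at most $3$ on $n\geq 8$ vertices is connected.)
\end{itemize}

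Also $n\geq 12\geq 5$, so Proposition \ref{prop:basic-ratios} applies.

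\emph{Computing the nine values.*} Write each $H_i$ as a vertex-disjoint union of its components. Apply Lemma \ref{lem:multiplicativity} repeatedly, using $\mathcal{R}_n(K_2)=1$ so that the $K_2$ components drop out. For instance,
\[
\mathcal{R}_n(H_7)=\mathcal{R}_n(P_4)\,\mathcal{R}_n(P_3)\,\mathcal{R}_n(K_2)^{p-5}.
\]
Then substitute the formulas of Proposition \ref{prop:basic-ratios}.

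The only subtlety in this step is that Lemma \ref{lem:multiplicativity} is stated for the normalized quantity at each component's own edge count. This is consistent, because $T_{n,p}$ factors as $T_{n,p}=n^{n-2}\bigl((n-2)/n\bigr)^{p}$, and the determinant in Lemma \ref{lem:normalized-tree} splits blockwise.

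The main obstacle, which is mild, is justifying connectivity of $K_n-F$ uniformly. The degree argument above handles it.
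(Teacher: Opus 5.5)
Your proposal is correct and matches the paper's proof, which is exactly the one-line application of Lemma \ref{lem:multiplicativity} with $\mathcal{R}_n(K_2)=1$ to Proposition \ref{prop:basic-ratios}. Your extra checks of realizability and connectivity are sound; the paper's corollary proof omits them, and it handles connectivity only later, in the proof of Theorem \ref{thm:main-ranking}, using the edge-connectivity bound $p\le n/2<n-1$ rather than your minimum-degree argument.
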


\begin{proof}

It suffices to apply Lemma \ref{lem:multiplicativity} and
\[
\mathcal{R}_n(K_2)=1
\]
to Proposition \ref{prop:basic-ratios}.

\end{proof}

Now set
\[
\rho_i=\mathcal{R}_n(H_i)
\qquad
(1\leq i\leq9).
\]

The following proposition completely determines the ranking of these nine candidates.

\begin{proposition}
\label{prop:candidate-order}

Let $p\geq6$ and $n\geq2p$.
Then
\[
\rho_1>\rho_2>\rho_3>\rho_4>\rho_5>
\rho_6>\rho_7>\rho_8>\rho_9.
\]

\end{proposition}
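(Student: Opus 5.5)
We prove the chain by comparing consecutive terms using the explicit formulas of Corollary \ref{cor:nine-ratios}. Throughout we write $u=1/x$ with $x=n-2$. Since $p\geq6$ and $n\geq2p$, we have $x\geq10$, so $0<u\leq 1/10$. Every comparison then reduces to the sign of an explicit polynomial in $u$ on the interval $(0,1/10]$.

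The steps, in order, are as follows.

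\begin{enumerate}
\item $\rho_1-\rho_2=u^2>0$.
\item Expanding $(1-u^2)^2=1-2u^2+u^4$ gives $\rho_2-\rho_3=u^2-u^4=u^2(1-u^2)>0$.
\item $\rho_3-\rho_4=u^4>0$.
\item $\rho_4-\rho_5=u^2-2u^3=u^2(1-2u)>0$, since $u\leq1/10$.
\item Expanding $(1-u^2)^3=1-3u^2+3u^4-u^6$ gives $\rho_5-\rho_6=2u^3-3u^4+u^6=u^3(2-3u+u^3)$. This is positive because $3u\leq 3/10<2$.
\item Expanding $(1-2u^2)(1-u^2)=1-3u^2+2u^4$ gives $\rho_6-\rho_7=u^4-u^6=u^4(1-u^2)>0$.
\item $\rho_7-\rho_8=2u^4-u^4=u^4>0$.
\item $\rho_8-\rho_9=u^4+2u^3>0$.
\end{enumerate}

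Each difference is a positive power of $u$ times a factor that is positive on $(0,1/10]$. Strict positivity of every difference gives the full chain.

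There is no substantial obstacle, since the proof is just bookkeeping. The comparisons needing care are those between graphs with the same leading term $-3u^2$, namely $\rho_5$ through $\rho_9$. There the order is decided by the cubic and quartic corrections: $+2u^3$ for $K_3$, then $0\cdot u^3+3u^4$, $2u^4$, $u^4$ for $3P_3$, $P_4\cup P_3$, $P_5$, and finally $-2u^3$ for $K_{1,3}$. The only place the hypothesis $n\geq2p$ actually enters is through $u\leq1/10$, used to control the lower-order sign corrections in steps 4 and 5. In fact $n\geq5$ would already suffice for those two factors to be positive.
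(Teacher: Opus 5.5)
Your proof is correct and is essentially the paper's argument: the paper also computes the eight successive differences from Corollary~\ref{cor:nine-ratios} with $x=n-2\geq10$, and gets exactly your expressions written as rational functions of $x$ (e.g.\ $\rho_5-\rho_6=(x-1)^2(2x+1)/x^6$, which is your $u^3(2-3u+u^3)=u^3(1-u)^2(2+u)$), so the substitution $u=1/x$ is only cosmetic. Your side remark that the signs already hold for $n\geq5$ is right as far as the algebra goes, but $n\geq2p$ is still needed to make $H_1=pK_2$ realizable, so that all the $\rho_i$ are defined.
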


\begin{proof}

Set $x=n-2$.
Since $p\geq6$ and $n\geq2p$,
\[
n\geq12,
\qquad
x\geq10>2.
\]

Using the formulas in Corollary \ref{cor:nine-ratios},
a direct calculation of the successive differences gives
\[
\begin{array}{
r@{\;}c@{\;}l
@{\qquad\qquad}
r@{\;}c@{\;}l
}
\rho_1-\rho_2 & = & \dfrac1{x^2}>0
&
\rho_2-\rho_3 & = & \dfrac{x^2-1}{x^4}>0
\\[12pt]
\rho_3-\rho_4 & = & \dfrac1{x^4}>0
&
\rho_4-\rho_5 & = & \dfrac{x-2}{x^3}>0
\\[12pt]
\rho_5-\rho_6 & = & \dfrac{(x-1)^2(2x+1)}{x^6}>0
&
\rho_6-\rho_7 & = & \dfrac{x^2-1}{x^6}>0
\\[12pt]
\rho_7-\rho_8 & = & \dfrac1{x^4}>0
&
\rho_8-\rho_9 & = & \dfrac{2x+1}{x^4}>0
\end{array}
\]
Therefore,
\[
\rho_1>\rho_2>\rho_3>\rho_4>\rho_5>
\rho_6>\rho_7>\rho_8>\rho_9.
\]

\end{proof}

\section{Exclusion of large interaction numbers and the main theorem}
\label{sec:main}
The case $a(F)\leq3$ is completely determined by
Proposition \ref{prop:small-interaction} and
Proposition \ref{prop:candidate-order}.
It remains to exclude deletion graphs with $a(F)\geq4$.

\begin{proposition}
\label{prop:large-interaction}

Let $n\geq12$, and suppose that
$K_n-F$ is connected.
If $a(F)\geq4$, then
\[
\mathcal{R}_{n,p}(F)
<
\mathcal{R}_n(K_{1,3}).
\]

\end{proposition}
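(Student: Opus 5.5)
The plan is to combine the simplified stability estimate of Corollary \ref{cor:simple-stability} with the explicit formula for $\mathcal{R}_n(K_{1,3})$ from Proposition \ref{prop:basic-ratios}, and then reduce everything to a polynomial inequality in $x=n-2$.

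\textbf{Step 1: reduce to $a(F)=4$.} By Corollary \ref{cor:simple-stability} and $a(F)\geq4$,
\[
\mathcal{R}_{n,p}(F)\leq\exp\left(-\frac{a(F)}{n(n-2)}\right)\leq\exp\left(-\frac{4}{x(x+2)}\right).
\]
So it suffices to show that
\[
\exp\left(-\frac{4}{x(x+2)}\right)<1-\frac{3}{x^2}-\frac{2}{x^3}=\mathcal{R}_n(K_{1,3})
\]
whenever $x\geq10$, which is exactly the hypothesis $n\geq12$.

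\textbf{Step 2: remove the exponential.} I would use the elementary bound $e^{-y}\leq1-y+y^2/2$, valid for $y\geq0$ (for instance by Taylor's theorem with remainder $-e^{-\xi}y^3/6\leq0$). Taking $y=4/(x(x+2))$, it then suffices to prove
\[
\frac{4}{x(x+2)}-\frac{8}{x^2(x+2)^2}>\frac{3x+2}{x^3}.
\]

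\textbf{Step 3: reduce to a cubic.} Multiplying through by $x^3(x+2)^2>0$, the inequality becomes
\[
4x^3+8x^2-8x>(3x+2)(x+2)^2=3x^3+14x^2+20x+8,
\]
that is,
\[
g(x)=x^3-6x^2-28x-8>0.
\]
We have $g(10)=112>0$. Moreover $g'(x)=3x^2-12x-28>0$ for $x\geq10$, so $g$ is increasing there and $g(x)>0$ for all $x\geq10$. Chaining Steps 1--3 gives $\mathcal{R}_{n,p}(F)<\mathcal{R}_n(K_{1,3})$.

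The only delicate point is whether the weaker Corollary \ref{cor:simple-stability} is strong enough at the smallest admissible value $n=12$. The margin $g(10)=112$ is comfortably positive, so it should be. If it had failed, I would fall back on the sharper bound of Theorem \ref{thm:tree-stability} and compare the logarithms directly.
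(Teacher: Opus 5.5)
Your proposal is correct and follows the paper's proof essentially step for step. Both arguments apply Corollary~\ref{cor:simple-stability} with $a(F)\geq4$, use the quadratic Taylor bound $e^{-y}\leq 1-y+y^2/2$, and reduce the comparison to $x^3-6x^2-28x-8>0$ for $x\geq10$, which follows from the value $112$ at $x=10$ and a positive derivative beyond that point.
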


\begin{proof}

Set $x=n-2$.
Since $n\geq12$, we have $x\geq10$.

By Corollary \ref{cor:simple-stability},
\[
\mathcal{R}_{n,p}(F)
\leq
\exp\left(
-\frac{a(F)}{n(n-2)}
\right).
\]
Since $a(F)\geq4$,
\[
\mathcal{R}_{n,p}(F)
\leq
\exp\left(
-\frac{4}{x(x+2)}
\right).
\]

Now set
\[
y=\frac{4}{x(x+2)}>0.
\]

For every $y>0$,
\[
e^{-y}
<
1-y+\frac{y^2}{2},
\]
and hence
\[
\mathcal{R}_{n,p}(F)
<
1-\frac{4}{x(x+2)}
+
\frac{8}{x^2(x+2)^2}.
\]

On the other hand, by Proposition \ref{prop:basic-ratios},
\[
\mathcal{R}_n(K_{1,3})
=
1-\frac{3}{x^2}-\frac{2}{x^3}.
\]
Therefore,
\[
\begin{aligned}
\mathcal{R}_n(K_{1,3})
-
\left\{
1-\frac{4}{x(x+2)}
+
\frac{8}{x^2(x+2)^2}
\right\}
&=
\frac{x^3-6x^2-28x-8}
{x^3(x+2)^2}.
\end{aligned}
\]

Set
\[
f(x)=x^3-6x^2-28x-8.
\]

At $x=10$,
\[
f(10)=112>0.
\]
Moreover,
\[
f'(x)=3x^2-12x-28.
\]
For $x\geq10$, we have $f'(x)>0$, and hence
\[
f(x)>0
\qquad
(x\geq10).
\]

Therefore,
\[
1-\frac{4}{x(x+2)}
+
\frac{8}{x^2(x+2)^2}
<
1-\frac{3}{x^2}-\frac{2}{x^3}
=
\mathcal{R}_n(K_{1,3}).
\]

Consequently,
\[
\mathcal{R}_{n,p}(F)
<
\mathcal{R}_n(K_{1,3}),
\]
as required.

\end{proof}

Thus, all deletion graphs with interaction number at least $4$
can be excluded from the first nine positions simultaneously,
without classifying them individually.

Combining the preceding results yields the main theorem of this paper.

\begin{theorem}
\label{thm:main-ranking}

Let $p\geq6$ and $n\geq2p$, and let
$H_1,\ldots,H_9$ be the deletion graphs defined in
Corollary \ref{cor:nine-ratios}.
Among all graphs obtained from $K_n$ by deleting exactly $p$ edges,
the nine largest spanning-tree counts are attained uniquely up to
isomorphism by
\[
K_n-H_1,\ldots,K_n-H_9,
\]
in this order.
Namely,
\[
\tau(K_n-H_1)
>
\tau(K_n-H_2)
>
\cdots
>
\tau(K_n-H_9).
\]
Furthermore, if a $p$-edge deletion graph $H_F$
is not isomorphic to any of $H_1,\ldots,H_9$, then
\[
\tau(K_n-F)
<
\tau(K_n-H_9).
\]

\end{theorem}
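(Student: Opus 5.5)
The plan is to assemble the main theorem from results already in place. Everything is phrased in terms of the normalized count. For fixed $n$ and $p$, we have $\tau(K_n-F)=T_{n,p}\,\mathcal{R}_{n,p}(F)$ with $T_{n,p}>0$ independent of $F$. Hence comparing spanning-tree counts among $p$-edge deletions is the same as comparing $\mathcal{R}_{n,p}(F)$.

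First I will check the preliminaries.
\begin{itemize}
\item \emph{Connectivity.} Every $G=K_n-F$ under consideration is connected, so Theorem \ref{thm:tree-defect-formula}, Lemma \ref{lem:normalized-tree} and Corollary \ref{cor:simple-stability} apply. For $n\geq 2p\geq 12$ and $p$ deleted edges this is immediate. Each vertex keeps degree at least $n-1-p\geq p-1$. Moreover, any edge cut of $K_n$ separating $k$ vertices has $k(n-k)\geq n-1>p$ edges, so removing $p$ edges cannot disconnect $K_n$.
\item \emph{Realizability.} Each $H_i$ has at most $2p$ vertices (the maximum is attained by $pK_2$). Since $n\geq 2p$, each $H_i$ is realizable in $K_n$, and the formulas of Corollary \ref{cor:nine-ratios} are valid.
\end{itemize}

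Second, I will partition the $p$-edge deletion graphs by their interaction number $a(F)$.
\begin{itemize}
\item \emph{Case $a(F)\leq 3$.} By Proposition \ref{prop:small-interaction}, $H_F$ is isomorphic to exactly one of $H_1,\dots,H_9$. Since $\mathcal{R}_n(H)$ depends only on the isomorphism class, Proposition \ref{prop:candidate-order} gives the strict chain $\rho_1>\cdots>\rho_9$. This yields $\tau(K_n-H_1)>\cdots>\tau(K_n-H_9)$.
\item \emph{Case $a(F)\geq 4$.} Proposition \ref{prop:large-interaction}, applicable since $n\geq 12$, gives $\mathcal{R}_{n,p}(F)<\mathcal{R}_n(K_{1,3})$. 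Because $\mathcal{R}_n(K_2)=1$, Lemma \ref{lem:multiplicativity} gives $\mathcal{R}_n(K_{1,3})=\rho_9$. Multiplying by $T_{n,p}$ then gives $\tau(K_n-F)<\tau(K_n-H_9)$.
\end{itemize}

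Finally, I will combine the two cases. Any deletion graph not isomorphic to some $H_i$ must have $a(F)\geq4$, since the classification for $a(F)\le 3$ is exhaustive. So every such graph lies strictly below all nine candidates. Together with the strict chain, this shows that the nine largest values are attained exactly by $H_1,\dots,H_9$, uniquely up to isomorphism and in the stated order.

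The main point requiring care is the bookkeeping rather than any estimate. I must verify that the hypothesis $p\geq6$ needed for Proposition \ref{prop:small-interaction} holds, and that every $p$-edge $F$ indeed falls into one of the two cases. I must also confirm that strictness is preserved when multiplying by the positive constant $T_{n,p}$. All analytic content is already contained in the earlier propositions.
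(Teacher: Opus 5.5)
Your proposal is correct and follows essentially the same route as the paper's proof. Connectivity comes from the edge-connectivity of $K_n$, realizability from $n\geq 2p$, and the case $a(F)\leq 3$ is settled by the exhaustive classification together with Proposition \ref{prop:candidate-order}; the case $a(F)\geq 4$ is settled by Proposition \ref{prop:large-interaction} with $\mathcal{R}_n(K_{1,3})=\rho_9$, and the common positive factor $T_{n,p}$ transfers everything to $\tau$.
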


\begin{proof}
Since $p\geq6$ and $n\geq2p$,
\[
n\geq12,
\qquad
p\leq\frac n2<n-1.
\]
The edge-connectivity of $K_n$ is $n-1$, so $K_n-F$ is connected
for every $p$-edge deletion set $F$.
Also, each $H_i$ has at most $2p$ vertices and is therefore realizable
as a subgraph of $K_n$.

Consider an arbitrary $p$-edge deletion graph $H_F$.
By Proposition \ref{prop:small-interaction},
if $a(F)\leq3$, then, up to isomorphism,
$H_F$ is one of $H_1,\ldots,H_9$.
Moreover, by Proposition \ref{prop:candidate-order},
\[
\mathcal{R}_n(H_1)
>
\mathcal{R}_n(H_2)
>
\cdots
>
\mathcal{R}_n(H_9).
\]

Since all deletion graphs under consideration have $p$ edges,
the normalization factor
\[
T_{n,p}
=
n^{\,n-p-2}(n-2)^p
\]
is common to all of them.
Therefore, from
\[
\tau(K_n-H_i)
=
T_{n,p}\mathcal{R}_n(H_i),
\]
we obtain
\[
\tau(K_n-H_1)
>
\tau(K_n-H_2)
>
\cdots
>
\tau(K_n-H_9).
\]

Finally, suppose that
$H_F$ is not isomorphic to any of $H_1,\ldots,H_9$.
By Proposition \ref{prop:small-interaction},
in this case we must have
\[
a(F)\geq4.
\]
Since $n\geq12$,
Proposition \ref{prop:large-interaction} gives
\[
\mathcal{R}_{n,p}(F)
<
\mathcal{R}_n(K_{1,3}).
\]
By Lemma \ref{lem:multiplicativity} and $\mathcal{R}_n(K_2)=1$,
\[
\mathcal{R}_n(K_{1,3})
=
\mathcal{R}_n(H_9).
\]
Therefore,
\[
\mathcal{R}_{n,p}(F)
<
\mathcal{R}_n(H_9).
\]
Multiplying again by the common normalization factor $T_{n,p}$ yields
\[
\tau(K_n-F)
<
\tau(K_n-H_9).
\]
Hence,
$H_1,\ldots,H_9$ uniquely attain, up to isomorphism,
the nine largest spanning tree counts in this order.

\end{proof}

\section{Logarithmic expansion and the local structure of the deletion graph}
\label{sec:log-expansion}
To clarify the role of the interaction number in the decrease of the
spanning-tree count, let
\[
A=Q-2I_p
\]
have eigenvalues $\mu_1,\ldots,\mu_p$, and set
\[
\mathcal{S}_{n,p}(F)
=
-\log\mathcal{R}_{n,p}(F).
\]
The logarithm admits the following expansion in terms of the traces of $A$.

\begin{proposition}
\label{prop:log-expansion}

Let $n\geq5$, and suppose that
$K_n-F$ is connected.
Then
\[
\mathcal{S}_{n,p}(F)
=
\sum_{k=2}^{\infty}
\frac{\operatorname{tr}(A^k)}
{k(n-2)^k}.
\]

\end{proposition}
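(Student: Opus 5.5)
The plan is to work eigenvalue by eigenvalue from the product formula in Lemma \ref{lem:normalized-tree}, expand each logarithm in its power series, and then reassemble the sums as traces of powers of $A$. Set $x=n-2$ and let $\mu_1,\ldots,\mu_p$ be the eigenvalues of $A$. By Lemma \ref{lem:normalized-tree},
\[
\mathcal{S}_{n,p}(F)=-\sum_{\alpha=1}^{p}\log\left(1-\frac{\mu_\alpha}{x}\right),
\]
and every factor is positive, since $nI_p-Q$ is positive definite by Lemma \ref{lem:defect-bound}.

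The key step is to check that $|\mu_\alpha/x|<1$ for every $\alpha$, so that the Mercator series
\[
-\log(1-t)=\sum_{k\geq1}\frac{t^k}{k}
\]
converges absolutely at $t=\mu_\alpha/x$. This is the only point where the hypotheses enter.
\begin{itemize}
\item Lemma \ref{lem:defect-moments} gives $-2\leq\mu_\alpha<x$. The strict upper bound $\mu_\alpha<x$ comes from connectivity of $K_n-F$ through Lemma \ref{lem:defect-bound}.
\item For the lower side we need $-2>-x$, that is, $x>2$. This is where $n\geq5$ is used: it gives $x\geq3$, so $\mu_\alpha/x\geq-2/3>-1$.
\end{itemize}
Hence $|\mu_\alpha/x|<1$ for all $\alpha$.

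With convergence in hand, we expand each logarithm. Since there are only $p$ eigenvalues, the order of summation over $\alpha$ and $k$ may be exchanged freely, which gives
\[
\mathcal{S}_{n,p}(F)=\sum_{k\geq1}\frac{1}{k x^k}\sum_{\alpha=1}^{p}\mu_\alpha^k .
\]
Because $A$ is real symmetric, $\sum_\alpha\mu_\alpha^k=\operatorname{tr}(A^k)$. The $k=1$ term vanishes because $\operatorname{tr}A=0$ by Lemma \ref{lem:defect-moments}. Substituting $x=n-2$ then yields the stated series starting at $k=2$.

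I expect the only real obstacle to be justifying the strict bound $|\mu_\alpha|<n-2$, since the rest is routine power-series manipulation. That bound is fully supplied by Lemma \ref{lem:defect-bound} together with $n\geq5$.
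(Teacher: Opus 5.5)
Your proposal is correct and follows the paper's proof essentially step for step. Both arguments bound $-2\le\mu_\alpha<n-2$, use $n\ge5$ to get $|\mu_\alpha/(n-2)|<1$, expand each logarithm as a power series, interchange the finite sum over $\alpha$ with the sum over $k$, identify $\sum_\alpha\mu_\alpha^k$ with $\operatorname{tr}(A^k)$, and drop the $k=1$ term because $\operatorname{tr}A=0$.
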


\begin{proof}

Set $x=n-2$.
By Lemma \ref{lem:defect-moments},
\[
-2\leq\mu_\alpha<x.
\]
Since $n\geq5$, we have $x>2$, and therefore
\[
\left|
\frac{\mu_\alpha}{x}
\right|
<1
\qquad
(1\leq\alpha\leq p).
\]

By Lemma \ref{lem:normalized-tree},
\[
\mathcal{R}_{n,p}(F)
=
\prod_{\alpha=1}^{p}
\left(
1-\frac{\mu_\alpha}{x}
\right).
\]
Hence,
\[
\begin{aligned}
\mathcal{S}_{n,p}(F)
&=
-\sum_{\alpha=1}^{p}
\log\left(
1-\frac{\mu_\alpha}{x}
\right)
\\
&=
\sum_{\alpha=1}^{p}
\sum_{k=1}^{\infty}
\frac{\mu_\alpha^k}{kx^k}.
\end{aligned}
\]

Since the outer sum is taken over finitely many values of $\alpha$,
we may interchange the order of summation, obtaining
\[
\mathcal{S}_{n,p}(F)
=
\sum_{k=1}^{\infty}
\frac{1}{kx^k}
\sum_{\alpha=1}^{p}\mu_\alpha^k.
\]

Since $A$ is a real symmetric matrix,
\[
\sum_{\alpha=1}^{p}\mu_\alpha^k
=
\operatorname{tr}(A^k).
\]
Therefore,
\[
\mathcal{S}_{n,p}(F)
=
\sum_{k=1}^{\infty}
\frac{\operatorname{tr}(A^k)}{kx^k}.
\]

Furthermore, by Lemma \ref{lem:defect-moments},
\[
\operatorname{tr}A=0,
\]
so the first-order term vanishes.
Thus,
\[
\mathcal{S}_{n,p}(F)
=
\sum_{k=2}^{\infty}
\frac{\operatorname{tr}(A^k)}
{k(n-2)^k}.
\]

\end{proof}

For the quadratic term,
Lemma \ref{lem:interaction-trace} gives
\[
\operatorname{tr}A^2
=
2a(F).
\]

Therefore, the first term in
Proposition \ref{prop:log-expansion} is
\[
\frac{a(F)}{(n-2)^2}.
\]

We next determine the combinatorial meaning of the cubic term.
Let $t(H_F)$ denote the number of triangles contained in $H_F$.

\begin{lemma}
\label{lem:trace-cube}

\[
\operatorname{tr}A^3
=
6
\left\{
\sum_{v\in V(H_F)}
\binom{d_{H_F}(v)}{3}
-
t(H_F)
\right\}.
\]

\end{lemma}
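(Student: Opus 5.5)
Since $Q_{\alpha\alpha}=2$, the diagonal of $A=Q-2I_p$ vanishes and $A_{\alpha\beta}=Q_{\alpha\beta}\in\{0,\pm1\}$ for $\alpha\neq\beta$, with $A_{\alpha\beta}\neq0$ exactly when $e_\alpha$ and $e_\beta$ share an endpoint (Lemma~\ref{lem:basic-Q}). The plan is to expand
\[
\operatorname{tr}A^3=\sum_{\alpha,\beta,\gamma}A_{\alpha\beta}A_{\beta\gamma}A_{\gamma\alpha}.
\]
Because the diagonal is zero, only ordered triples of pairwise distinct edges contribute. Such a triple contributes nonzero only if the three edges pairwise share an endpoint. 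This means each unordered set of three edges is counted $6$ times, with value $A_{\alpha\beta}A_{\beta\gamma}A_{\gamma\alpha}$, which is symmetric under permutation. Hence
\[
\operatorname{tr}A^3=6\sum_{\{\alpha,\beta,\gamma\}}A_{\alpha\beta}A_{\beta\gamma}A_{\gamma\alpha},
\]
where the sum runs over pairwise intersecting triples.

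Next I classify pairwise intersecting triples of distinct edges in a simple graph. Either all three share a common vertex (a star $K_{1,3}$ at $v$), or they form a triangle. If there is no common vertex, then $e_1\cap e_2=\{u\}$, and $e_3$ must meet $e_1$ and $e_2$ at vertices other than $u$. So $e_3$ joins the other endpoints, and the three edges form a triangle. Stars centred at $v$ number $\binom{d_{H_F}(v)}{3}$, and two distinct edges of a star cannot both pass through a second common vertex, so the centre is unique. Hence there are $\sum_v\binom{d_{H_F}(v)}{3}$ star triples and $t(H_F)$ triangle triples.

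It remains to compute the sign $A_{\alpha\beta}A_{\beta\gamma}A_{\gamma\alpha}$, which is orientation independent. Conjugating $A$ by a diagonal $\pm1$ matrix flips orientations and leaves cyclic products unchanged. For two edges sharing $w$, $Q_{\alpha\beta}=b_\alpha^{\mathsf T}b_\beta=\epsilon_\alpha(w)\epsilon_\beta(w)$, where $\epsilon_\alpha(w)=\pm1$ is the coefficient of $e_w$ in $b_\alpha$.
\begin{itemize}
\item For a star at $v$, the product is $\epsilon_\alpha(v)^2\epsilon_\beta(v)^2\epsilon_\gamma(v)^2=1$.
\item For a triangle on $u,v,w$, orient it cyclically: $b_1=e_u-e_v$, $b_2=e_v-e_w$, $b_3=e_w-e_u$. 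Each pair meets at a head/tail, so every entry is $-1$ and the product is $-1$.
\end{itemize}
Summing gives $6\{\sum_v\binom{d_{H_F}(v)}{3}-t(H_F)\}$.

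The main point needing care is the classification step, in particular checking that no triple is double-counted as both a star and a triangle. A triangle has no common vertex, so this cannot happen. Everything else is routine bookkeeping of signs.
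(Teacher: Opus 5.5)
Your proof is correct and follows the same route as the paper. You expand $\operatorname{tr}A^3$ over ordered triples, use the zero diagonal to restrict to pairwise intersecting triples of distinct edges, split these into stars (sign $+1$) and triangles (sign $-1$), and account for the factor $6$ from permutations. Your $\epsilon$-factorization for the star sign, your explicit proof of the star/triangle dichotomy, and your check that each star has a unique centre are a bit more detailed than the paper, which orients the star edges away from the centre, appeals to orientation independence, and simply asserts the dichotomy.
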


\begin{proof}

Since all diagonal entries of $A$ are $0$,
\[
\operatorname{tr}A^3
=
\sum_{\alpha,\beta,\gamma=1}^{p}
A_{\alpha\beta}
A_{\beta\gamma}
A_{\gamma\alpha}
\]
can have a nonzero contribution only if
$\alpha,\beta,\gamma$ are distinct and the three deleted edges
\[
e_\alpha,\qquad
e_\beta,\qquad
e_\gamma
\]
pairwise share endpoints.

In a simple graph, there are only two configurations of three distinct edges
satisfying this condition.
Indeed, if three distinct edges are pairwise intersecting, then either
they are all incident with a common vertex, or they form a triangle.

First, suppose that the three edges are incident with a common vertex $v$.
If all three edges are oriented away from $v$,
the inner product of any two corresponding incidence vectors is $1$.
Therefore,
\[
A_{\alpha\beta}
A_{\beta\gamma}
A_{\gamma\alpha}
=
1.
\]

Changing the orientation of an edge changes the signs of the corresponding
row and column of $A$ simultaneously, and hence the value of this product
is independent of the choice of orientations.

The number of ways to choose three edges incident with $v$ is
\[
\binom{d_{H_F}(v)}{3},
\]
so the total number of unordered triples of this type is
\[
\sum_{v\in V(H_F)}
\binom{d_{H_F}(v)}{3}.
\]

Next, suppose that
$e_\alpha,e_\beta,e_\gamma$ form a triangle.
If the three edges are oriented cyclically around the triangle,
all three inner products are $-1$, and hence
\[
A_{\alpha\beta}
A_{\beta\gamma}
A_{\gamma\alpha}
=
-1.
\]

Again, this product is independent of the choice of edge orientations.
Thus, each triangle contributes $-1$.

Finally, each unordered triple
\[
\{e_\alpha,e_\beta,e_\gamma\}
\]
appears six times in the sum defining
$\operatorname{tr}A^3$,
corresponding to the six permutations of
$\alpha,\beta,\gamma$.
Therefore,
\[
\operatorname{tr}A^3
=
6
\left\{
\sum_{v\in V(H_F)}
\binom{d_{H_F}(v)}{3}
-
t(H_F)
\right\}.
\]

\end{proof}

Combining Proposition \ref{prop:log-expansion},
Lemma \ref{lem:interaction-trace}, and
Lemma \ref{lem:trace-cube},
we obtain the following local structural expansion.

\begin{corollary}
\label{cor:local-expansion}

Fix the isomorphism class of a deletion graph $H_F$, and regard it as
a subgraph of $K_n$ for all sufficiently large $n$.
Then, as $n\to\infty$,
\[
\begin{aligned}
-\log\mathcal{R}_{n,p}(F)
&=
\frac{a(F)}{(n-2)^2}
+
\frac{2}{(n-2)^3}
\left\{
\sum_{v\in V(H_F)}
\binom{d_{H_F}(v)}{3}
-
t(H_F)
\right\}
+
O\left((n-2)^{-4}\right).
\end{aligned}
\]

\end{corollary}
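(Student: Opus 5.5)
The plan is to start from Proposition~\ref{prop:log-expansion}, which applies once $n\geq5$, and split off the terms with $k=2$ and $k=3$. Lemma~\ref{lem:interaction-trace} turns the quadratic term into $\operatorname{tr}A^2/(2(n-2)^2)=a(F)/(n-2)^2$. Lemma~\ref{lem:trace-cube} turns the cubic term into
\[
\frac{\operatorname{tr}A^3}{3(n-2)^3}
=\frac{2}{(n-2)^3}\left\{\sum_{v\in V(H_F)}\binom{d_{H_F}(v)}{3}-t(H_F)\right\}.
\]
This already gives the two displayed main terms. What remains is to show that the tail $\sum_{k\geq4}\operatorname{tr}(A^k)/(k(n-2)^k)$ is $O((n-2)^{-4})$.

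The key observation for the tail is that $A$ does not depend on $n$. We fix the isomorphism class of $H_F$, so $p$ is fixed. Since isolated vertices of $K_n$ outside $V(F)$ contribute nothing, $Q=B^{\mathsf T}B$ is determined by $H_F$ alone, up to an orientation change. By Lemma~\ref{lem:basic-Q} such a change does not affect the spectrum. So the eigenvalues $\mu_1,\ldots,\mu_p$ of $A$ are fixed numbers.

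Set $M=\max_\alpha|\mu_\alpha|$, a constant independent of $n$. Then $|\operatorname{tr}(A^k)|\leq pM^k$. For $x=n-2\geq 2M$ we get
\[
\left|\sum_{k\geq4}\frac{\operatorname{tr}(A^k)}{kx^k}\right|
\leq\frac{p}{4}\sum_{k\geq4}\left(\frac{M}{x}\right)^k
=\frac{p}{4}\cdot\frac{(M/x)^4}{1-M/x}
\leq\frac{pM^4}{2x^4}.
\]
This is $O(x^{-4})$. Alternatively, Lemma~\ref{lem:defect-moments} bounds $\mu_\alpha\geq-2$, and $\mu_\alpha\leq\lambda_{\max}(Q)-2$, which is at most the maximal Laplacian eigenvalue of $H_F$ minus $2$ by Lemma~\ref{lem:defect-spectrum}. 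Either way $M$ is an $n$-independent constant.

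The only step needing care is the $n$-independence of the spectrum of $A$. The defect bound $\mu_\alpha<n-2$ in Lemma~\ref{lem:defect-moments} depends on $n$ and is too weak for this purpose. Lemma~\ref{lem:defect-spectrum} resolves it, since it identifies the spectrum of $Q$ with Laplacian data of the fixed graph $H_F$. Connectivity of $K_n-F$ for large $n$ is automatic, because $p<n-1$ eventually. Realizability of $H_F$ in $K_n$ is also automatic once $n\geq|V(H_F)|$.
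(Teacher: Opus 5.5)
Your proposal is correct and follows essentially the same route as the paper. You split off the $k=2,3$ terms of Proposition~\ref{prop:log-expansion}, evaluate them with Lemmas~\ref{lem:interaction-trace} and~\ref{lem:trace-cube}, and bound the tail by a geometric series in $M/(n-2)$, where $M=\max_\alpha|\mu_\alpha|$ does not depend on $n$. Your explicit justification of this $n$-independence via Lemma~\ref{lem:defect-spectrum} fills in a point the paper only asserts.
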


\begin{proof}

By Proposition \ref{prop:log-expansion},
\[
-\log\mathcal{R}_{n,p}(F)
=
\frac{\operatorname{tr}A^2}{2(n-2)^2}
+
\frac{\operatorname{tr}A^3}{3(n-2)^3}
+
O\left((n-2)^{-4}\right).
\]

Since $H_F$ is fixed,
$A$ and its eigenvalues are independent of $n$.
Let
\[
M=\max_{1\leq\alpha\leq p}|\mu_\alpha|.
\]
For all sufficiently large $n$, we have $M/(n-2)<1/2$, and hence
\[
\begin{aligned}
\left|
\sum_{k=4}^{\infty}
\frac{\operatorname{tr}(A^k)}{k(n-2)^k}
\right|
&\leq
p\sum_{k=4}^{\infty}
\left(
\frac{M}{n-2}
\right)^k
\\
&=
O\left((n-2)^{-4}\right).
\end{aligned}
\]

By Lemma \ref{lem:interaction-trace},
\[
\frac{\operatorname{tr}A^2}{2(n-2)^2}
=
\frac{a(F)}{(n-2)^2}.
\]

Also, by Lemma \ref{lem:trace-cube},
\[
\frac{\operatorname{tr}A^3}{3(n-2)^3}
=
\frac{2}{(n-2)^3}
\left\{
\sum_{v\in V(H_F)}
\binom{d_{H_F}(v)}{3}
-
t(H_F)
\right\}.
\]

This proves the assertion.

\end{proof}

Corollary \ref{cor:local-expansion} identifies $a(F)$ as the leading
local quantity governing the decrease from the matching case.
Among deletion graphs with the same interaction number, the next distinction
is given by
\[
\sum_{v\in V(H_F)}
\binom{d_{H_F}(v)}{3}
-
t(H_F).
\]
For example,
\[
a(K_3)=a(K_{1,3})=3.
\]

However, for $K_3$,
\[
\sum_{v\in V(K_3)}
\binom{d(v)}{3}=0,
\qquad
t(K_3)=1,
\]
and hence
\[
\sum_v\binom{d(v)}3-t(K_3)
=
-1.
\]

On the other hand, for $K_{1,3}$,
\[
\sum_{v\in V(K_{1,3})}
\binom{d(v)}3=1,
\qquad
t(K_{1,3})=0,
\]
and hence
\[
\sum_v\binom{d(v)}3-t(K_{1,3})
=
1.
\]

Therefore,
\[
\begin{aligned}
-\log\mathcal{R}_n(K_3)
&=
\frac{3}{(n-2)^2}
-
\frac{2}{(n-2)^3}
+
O\left((n-2)^{-4}\right),
\\
-\log\mathcal{R}_n(K_{1,3})
&=
\frac{3}{(n-2)^2}
+
\frac{2}{(n-2)^3}
+
O\left((n-2)^{-4}\right).
\end{aligned}
\]

This difference is consistent with the ordering
\[
\mathcal{R}_n(K_3)
>
\mathcal{R}_n(K_{1,3})
\]
obtained in Proposition \ref{prop:candidate-order}.

More generally, $A=Q-2I_p$ may be viewed as a signed adjacency matrix
of the line graph $L(H_F)$ of the deletion graph $H_F$.
Changing the orientation of a deleted edge corresponds to switching,
so $\operatorname{tr}A^k$ is independent of the choice of orientations
and may be interpreted as the signed sum of closed walks of length $k$
in $L(H_F)$.
Thus, the traces $\operatorname{tr}A^k$ with $k\geq4$ provide natural
quantities for distinguishing deletion graphs that cannot be distinguished
by the quadratic and cubic local structural quantities alone.
This perspective may also be useful in extending the higher-order ranking
beyond the ninth position.

\section{Conclusion and future directions}
\label{sec:conclusion}
We determined, up to isomorphism, the nine deletion graphs with the largest
spanning-tree counts among graphs obtained from $K_n$ by deleting $p$ edges,
under the assumptions $p\geq6$ and $n\geq2p$.
The main tool is a stability inequality for the normalized spanning-tree count
in terms of the interaction number $a(F)$.
Together with the classification of deletion graphs with $a(F)\leq3$
obtained in Part I, this estimate determines the first nine positions
without requiring an individual classification of the cases with $a(F)\geq4$.

The logarithmic expansion further shows that
\[
\operatorname{tr}A^2=2a(F)
\]
governs the leading deviation from the matching case, while the cubic term
involves
\[
\sum_{v\in V(H_F)}
\binom{d_{H_F}(v)}{3}
-
t(H_F).
\]
These higher-order traces may be useful for extending the ranking beyond
the ninth position.
It is also natural to determine how far the resulting ranking continues
to coincide with the Kirchhoff-index ranking obtained in Part I.

\section*{Statements and Declarations}
\noindent
\textbf{Funding.}
No funding was received to assist with the preparation of this manuscript.

\medskip
\noindent
\textbf{Competing interests.}
The authors have no relevant financial or non-financial interests to disclose.

\medskip
\noindent
\textbf{Data availability.}
No datasets were generated or analyzed during the current study.


\end{document}